\newcommand{\Sym}{\textrm{Sym}}
\newcommand{\fix}{\textrm{fix}}
\newtheorem{prop}{Proposition}[section]
\newtheorem{thm}[prop]{Theorem}
\newtheorem{conj}[prop]{Conjecture}
\newtheorem{example}[prop]{Example}
\newtheorem{lem}[prop]{Lemma}
\newtheorem{defn}[prop]{Definition}
\theoremstyle{definition}
\numberwithin{equation}{section}
\newcommand{\symme}{\mathrm{Sym}}
\newcommand{\Fix}{\mathrm{Fix}}
\def\cent#1#2{{\bf C}_{{#1}}{{(#2)}}}
\def\nor#1#2{{\bf N}_{{#1}}{{(#2)}}}
\def\Zent#1{{\bf Z}{{(#1)}}}
\begin{document}
\title{Cherlin's conjecture for sporadic simple groups}  

\author{Francesca Dalla Volta}
\address{Dipartimento di Matematica e Applicazioni, University of Milano-Bicocca, Via Cozzi 55, 20125 Milano, Italy} 
\email{francesca.dallavolta@unimib.it}

\author{Nick Gill}
\address{ Department of Mathematics, University of South Wales, Treforest, CF37 1DL, U.K.}
\email{nick.gill@southwales.ac.uk}

\author{Pablo Spiga}
\address{Dipartimento di Matematica e Applicazioni, University of Milano-Bicocca, Via Cozzi 55, 20125 Milano, Italy} 
\email{pablo.spiga@unimib.it}

\begin{abstract}
 We prove Cherlin's conjecture, concerning binary primitive permutation groups, for those groups with socle isomorphic to a sporadic simple group. 
\end{abstract}
\maketitle
\section{Introduction}\label{s: introduction}

In this paper we consider the following conjecture which is due to Cherlin, and which was given first in \cite{cherlin1}:

\begin{conj}\label{conj: cherlin} A finite primitive binary permutation group must be one of the following:
\begin{enumerate}
\item a Symmetric group $\Sym (n)$ acting naturally on $n$ elements;
\item a cyclic group of prime order acting regularly on itself;
\item an affine orthogonal group $V\cdot O(V)$ with $V$ a vector space over a finite field equipped with an anisotropic quadratic form acting on itself by translation, with complement the full orthogonal group $O(V)$.
\end{enumerate}
\end{conj}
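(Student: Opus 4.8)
The plan is to establish Conjecture~\ref{conj: cherlin} by combining the O'Nan--Scott theorem with the Classification of Finite Simple Groups (CFSG). Given a finite primitive permutation group $G \le \Sym(\Omega)$, the O'Nan--Scott theorem assigns it to one of the standard types (affine, almost simple, simple or compound diagonal, product action, twisted wreath), and the strategy is to show that the binary hypothesis is compatible with exactly the three families listed. The principal structural tool throughout is that binary-ness descends to point stabilisers: if $(G,\Omega)$ is binary and $\alpha\in\Omega$, then $(G_\alpha,\Omega)$ is again binary, and hence so is the action of $G_\alpha$ on each of its orbits. This yields an inductive framework in which a single non-binary configuration discovered inside any stabiliser suffices to eliminate a case.

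For the affine type $G = V \rtimes H$ with $V$ elementary abelian and $H \le \mathrm{GL}(V)$, I would analyse the admissible linear groups $H$ directly. The degenerate case $H=1$ gives the cyclic group of prime order acting regularly, which is item~(2). Otherwise I would translate the binary condition on two- and three-point stabilisers into the statement that the $H$-orbits on $V$ are separated by a single quadratic invariant; recognising this geometry as the anisotropic orthogonal one then forces $H$ to be the full orthogonal group $O(V)$, which is item~(3).

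The bulk of the work lies in the almost simple type, where the socle is a nonabelian simple group $T$ and the goal is to show that the only binary primitive action is the natural action of $\Sym(n)$, giving item~(1). Invoking CFSG, I would split into socle families. For $T=\mathrm{Alt}(n)$ one checks that every primitive action other than the natural one admits a non-binary witness among short tuples. The sporadic socles form precisely the content of the theorem proved in this paper and are handled by a case-by-case examination of maximal subgroups together with explicit computation of suborbits. The groups of Lie type, both classical and exceptional, are treated by bounding subdegrees and constructing explicit pairs of $2$-equivalent but non-conjugate tuples from the underlying geometry. Finally, the diagonal, product-action and twisted-wreath types are ruled out by exploiting their coordinate or diagonal structure to build ``swaps'' of entries that are $2$-equivalent yet lie in distinct $G$-orbits.

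I expect the Lie-type almost simple groups to be the main obstacle: the number of families, the dependence on rank and defining field, and the difficulty of controlling suborbits uniformly make a single clean argument hard to find, so that producing non-binary witnesses valid across all parametrised classical and exceptional actions is the deepest part of the programme. By contrast, the affine case and the alternating and sporadic socles are comparatively self-contained, the sporadic case---settled in the present paper---reducing to a finite though substantial computation.
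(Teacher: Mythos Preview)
The statement you have attempted is not a theorem in the paper but a conjecture: the paper states Conjecture~\ref{conj: cherlin} as open and proves only Theorem~\ref{t: sporadic}, the special case where the socle is sporadic. There is therefore no proof in the paper to compare your proposal against. Your outline is, in broad strokes, the programme that the community was pursuing at the time --- the O'Nan--Scott/Wiscons--Cherlin reduction to almost simple groups, followed by a CFSG case split --- and your assessment that the Lie-type socles are the hard core is accurate.

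That said, your proposal is not a proof, and the gap is not merely one of missing detail. At the time this paper was written, the almost-simple case for groups of Lie type of rank at least~$2$ was genuinely open (the paper cites it as work in preparation, \cite{gls_binary}); your sketch for that family --- ``bounding subdegrees and constructing explicit pairs of $2$-equivalent but non-conjugate tuples from the underlying geometry'' --- is a description of the desired outcome rather than a method. Likewise, your treatment of the affine case glosses over the actual content of Cherlin's argument: the step ``translate the binary condition \ldots\ into the statement that the $H$-orbits on $V$ are separated by a single quadratic invariant'' is exactly the nontrivial part, and you have not indicated how it is carried out. So the proposal correctly identifies the architecture of the eventual proof but supplies none of the load-bearing arguments, and for the Lie-type step no such argument existed in the literature when the paper appeared.
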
 

Thanks to work of Cherlin himself \cite{cherlin2}, and of Wiscons \cite{wiscons}, Conjecture~\ref{conj: cherlin} has been reduced to a statement about almost simple groups. In particular, to prove Conjecture~\ref{conj: cherlin} it would be sufficient to prove the following statement.

\begin{conj}\label{conj: cherlin2}
 If $G$ is a binary almost simple primitive permutation group on the set $\Omega$, then $G=\symme(\Omega)$. 
\end{conj}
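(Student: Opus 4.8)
My plan is to prove Conjecture~\ref{conj: cherlin2} by splitting into cases according to the isomorphism type of the socle $S$ of $G$. By the Classification of Finite Simple Groups, $S$ is an alternating group $\mathrm{Alt}(m)$ (with $m\ge 5$), a sporadic simple group, or a simple group of Lie type. In each case primitivity lets me identify $\Omega$ with the coset space $G/H$ for a core-free maximal subgroup $H=G_\omega$, so the task becomes the following: for every such $H$, either show that the action of $G$ on $G/H$ is non-binary, or recognise it as the natural action realising $G=\symme(\Omega)$.

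The common engine is a single non-binariness criterion. By definition $G$ is binary exactly when any two $n$-tuples $\bar a,\bar b\in\Omega^n$ that are \emph{locally equivalent}---for every pair of coordinates $i,j$ there is $g_{ij}\in G$ with $(a_i,a_j)^{g_{ij}}=(b_i,b_j)$---are automatically \emph{globally equivalent}, i.e.\ $\bar a^{\,g}=\bar b$ for a single $g\in G$. Thus, to show that some $G\ne\symme(\Omega)$ is non-binary I aim to produce an explicit \emph{witness}: a pair of tuples that is locally but not globally equivalent. The systematic way to build witnesses is to start from a nontrivial $g\in G$ and a small configuration of points adapted to the fixed-point structure of $g$, arranged so that every pair of coordinates agrees after some group element while the whole tuple cannot.

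For $S=\mathrm{Alt}(m)$, I would run through the primitive actions supplied by the O'Nan--Scott analysis of almost simple groups: the natural action on $m$ points (which yields the permitted conclusion $G=\symme(\Omega)$), the actions on $k$-subsets and on partitions, and the product-type actions. For each non-natural family the witnesses can be written down combinatorially; indeed the relational complexity of $\mathrm{Sym}(m)$ acting on $k$-subsets exceeds $2$ once $k\ge 2$ and $m$ is not tiny, and the remaining small cases are finite checks. For $S$ sporadic---the content of the present paper---the list of maximal subgroups $H$ is finite and explicitly known from the \textsc{Atlas}, so I would verify non-binariness computationally: for each coset action compute the $G$-orbits on pairs and then test directly whether the $2$-equivalence classes coincide with the $G$-orbits on the relevant short tuples, reducing fixed-point counts via permutation characters where the degree is too large for a naive orbit computation.

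The substantial case, and the main obstacle, is $S$ of Lie type. Here Aschbacher's theorem partitions the maximal subgroups $H$ into the geometric classes $\mathcal C_1,\dots,\mathcal C_8$ and the class $\mathcal S$ of almost simple irreducible subgroups. For the geometric classes $\Omega$ carries a concrete description in terms of subspaces, forms or decompositions of the natural module, and witnesses can be built from root elements and regular semisimple elements whose fixed subspaces are computable; the heart of the work is bounding and comparing these fixed-point structures uniformly across the families, for which I would use Jordan decomposition, the Lang--Steinberg theorem and known fixed-space dimensions. The genuinely hard parts are the class $\mathcal S$, where $H$ is an arbitrary almost simple group in an unpredictable irreducible embedding so that no geometry is available and one must fall back on order estimates (Liebeck-type bounds showing $|H|$ small, played off against lower bounds for the minimal degree of $G$) to exclude binariness, together with the exceptional groups of Lie type, where uniform witness constructions are least transparent and a case-by-case treatment guided by the subsystem and parabolic structure seems unavoidable.
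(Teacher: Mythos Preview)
The statement you are trying to prove is labelled a \emph{conjecture} in the paper, and the paper does not prove it: it establishes only the sporadic case (Theorem~\ref{t: sporadic}), while citing \cite{gs_binary} for the alternating socle, \cite{ghs_binary} for Lie rank~$1$, and explicitly describing the remaining Lie-type case (rank~$\ge 2$) as ongoing work \cite{gls_binary}. So there is no ``paper's own proof'' of Conjecture~\ref{conj: cherlin2} to compare against, and what you have written is a research programme rather than a proof. In particular, your Lie-type paragraph openly identifies class~$\mathcal S$ and the exceptional groups as ``genuinely hard'' and offers only heuristics (order bounds, Lang--Steinberg, case-by-case parabolic analysis); none of that constitutes an argument, and at the time of this paper no complete argument existed.

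Restricting to the sporadic case, your sketch---compute orbits on pairs and test whether $2$-equivalence classes coincide with $G$-orbits on short tuples, falling back on permutation characters when the degree is large---is in the right spirit but materially underestimates what is needed. The paper's Tests~1--3 correspond to your description, but for many actions of large degree (notably most Monster actions, where $|\Omega|$ can exceed $10^{50}$, and several Baby Monster actions where even the permutation character is unavailable) these direct methods are infeasible. The paper therefore relies on additional structural tools that your plan does not mention: the suborbit-reduction Lemma~\ref{l: again0} (Test~4), the divisor criterion Lemma~\ref{l: alot} (Test~5), and the fixed-point/$2$-closure criteria of Lemmas~\ref{l: M2} and~\ref{l: added}, all of which are essential to dispose of the Monster in \S\ref{s: monster}. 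Without analogues of these, your proposed computational check would not terminate on the large sporadic cases.
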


In this paper, we prove this conjecture for almost simple groups with sporadic socle. Formally, our main result is the following:

\begin{thm}\label{t: sporadic}
 Let $G$ be an almost simple primitive permutation group with socle isomorphic to a sporadic simple group. Then $G$ is not binary.
\end{thm}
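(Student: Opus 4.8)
The plan is to argue sporadic group by sporadic group. Since $G$ is primitive, a point stabiliser is a maximal subgroup $M$ of $G$; and for a sporadic simple group $S$ the only almost simple groups with socle $S$ are $S$ itself and, when $\mathrm{Out}(S)\neq 1$, the extension $S.2$. For each such $G$ the conjugacy classes of maximal subgroups are available in the literature --- from the ATLAS, from subsequent corrections, and, for the Monster, from the (now essentially complete) classification of its maximal subgroups. Thus the theorem reduces to a finite, explicit list of transitive actions of $G$ on $\Omega=G/M$, and for each I must produce a configuration witnessing the failure of binarity.

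First I would fix a convenient criterion for non-binarity. Unwinding the definition, $G$ fails to be binary as soon as there are tuples $(a_1,\dots,a_n)$ and $(b_1,\dots,b_n)$ in $\Omega^n$ that are $2$-equivalent --- each pair $(a_i,a_j)$ lies in the same $G$-orbit on $\Omega^2$ as $(b_i,b_j)$ --- but do not lie in a common $G$-orbit on $\Omega^n$. Using transitivity and $2$-equivalence one may normalise $a_1=b_1=\omega$ and $a_2=b_2=\alpha$, so the smallest interesting instance is $n=3$: it suffices to find points $\omega,\alpha,x,y\in\Omega$ with $x,y$ in a common $G_\omega$-orbit and in a common $G_\alpha$-orbit but in distinct orbits of the two-point stabiliser $G_{\omega,\alpha}$ (longer tuples are only invoked when $n=3$ does not already witness the failure). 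With $\omega$ fixed, the search runs over suborbit representatives $\alpha$ and over $G_\omega$-orbits, testing whether such an orbit splits under $G_{\omega,\alpha}$ while remaining a single $G_\alpha$-orbit. This is also the shape in which the problem can be attacked without building $\Omega$: the subdegrees, the rank, and the relevant orbit counts are controlled by the permutation character $1_M^G$ and by class-multiplication data, all of which are stored in the GAP character table library.

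For the overwhelming majority of the pairs $(G,M)$ the index $|G:M|$ is small enough that a faithful permutation representation of $G$, or at least the coset action on $G/M$, can be constructed directly in Magma or GAP. I would then compute the suborbits and the two-point stabilisers and carry out exactly the search described above, which in every such case turns up an explicit witness of small arity (typically $3$, occasionally a little more). Some of these are dispatched still more cheaply: whenever the orbital-graph structure on $\Omega$ has automorphism group strictly larger than $G$ --- that is, $G$ is not $2$-closed --- it is a fortiori not binary, and this settles many low-rank actions at once. The bookkeeping is voluminous but entirely routine.

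The main obstacle is the short list of actions of degree far too large to realise as permutations --- chiefly the Monster on the cosets of its smaller maximal subgroups, and to a lesser extent the Baby Monster and a handful of others. Here $\Omega$ cannot be formed, so the whole argument must be kept inside a subgroup: one locates the witnessing configuration within the maximal subgroup $M$ itself, or within a two-point stabiliser, computing with words in the standard generators and with the matrix or straight-line-program representations; and wherever possible one reduces to an action already known to be non-binary --- for instance if a section of $M$ induces on some suborbit a copy of $\Sym(m)$ on the $2$-subsets of an $m$-set (which is not binary once $m$ is moderately large), that configuration lifts back to a failure of binarity for $G$ on $\Omega$. Fixed-point counts and suborbit sizes read off from the permutation character are used to certify that the chosen points and elements behave as required. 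I expect each of these giant cases to need its own short, tailored argument rather than a uniform one, and that bringing the Monster's large-degree primitive actions under control is where essentially all the real difficulty of the theorem lies.
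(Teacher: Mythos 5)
Your outline matches the paper's overall architecture -- reduce to the finite list of primitive actions $G$ on $G/M$, find non-binary witnesses computationally where $\Omega$ can be built, use non-$2$-closure as a shortcut, and push the witness into a suborbit (i.e.\ into the action of $M$ on $M/(M\cap M^g)$) when $\Omega$ is too large. But there is a genuine gap exactly where you concede the ``real difficulty'' lies: your only stated method for the Monster's large-degree actions is to compute inside $G$ with standard generators, locating a witnessing configuration in $M$ or in a two-point stabiliser. For most of the problematic maximal subgroups of the Monster (e.g.\ $\mathrm{PGL}_2(19)$, $L_2(71)$, $41{:}40$, $13^{1+2}{:}(3\times 4S_4)$, \dots) this is not realistic: the permutation character $1_M^G$ is not available, and finding an element $g$ with $M\cap M^g$ under control requires explicit computation in the $196882$-dimensional representation that is out of reach at this scale. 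The paper's way around this is a new criterion (Lemma~\ref{l: alot}, ``Test 5'') that needs \emph{no information about the embedding of $M$ in $G$ at all}: one shows that every transitive, binary action of the abstract group $M$ satisfying certain necessary structural conditions has degree divisible by some $d$, and then observes that $d\nmid |\Omega|-1$, forcing some suborbit to carry a non-binary action of $M$. Nothing in your proposal supplies a substitute for this, and without it the majority of the Monster cases (all of Table~\ref{t: table2} and the case $5^{2+2+4}{:}(S_3\times\mathrm{GL}_2(5))$) remain open.

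Two smaller points. First, your claim that the $n=3$ search (does a $G_\omega$-orbit split under $G_{\omega,\alpha}$ while staying a single $G_\alpha$-orbit?) is ``controlled by the permutation character and class-multiplication data'' overstates what the character table gives you; the paper instead extracts from the permutation character the clean inequality $r_\ell(G)\le r_2(G)^{\ell(\ell-1)/2}$ of Lemma~\ref{l: characters}, which is a necessary condition for binarity computable from $1_M^G$ alone and which disposes of almost every action outside the Monster, including most of the Baby Monster where $\Omega$ cannot be constructed. Second, for the handful of residual cases (e.g.\ $2.B$, $3.Fi_{24}$, $(2^2\times F_4(2)){:}2$ in $B$, $(7{:}3\times He){:}2$ in the Monster) the paper relies on concrete fixed-point-set constructions (Lemmas~\ref{l: M2} and~\ref{l: added}): from a suitable elementary abelian $p$-group $\langle g,h\rangle$ with $g$, $h$, $gh^{-1}$ conjugate and $|\Fix(\langle g,h\rangle)|<|\Fix(g)|$ one builds a strongly non-binary subset out of $\Fix(g)\cup\Fix(h)\cup\Fix(gh^{-1})$, using only class fusion and local-subgroup data from the literature. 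You gesture at ``fixed-point counts certifying the configuration'' but do not give a mechanism that turns such counts into a witness; this construction is the missing ingredient.
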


Note that we include the group ${^2F_4}(2)'$ in the list of sporadic groups -- this group is sometimes considered ``the $27^{\rm th}$ sporadic group'' -- so Theorem~\ref{t: sporadic} applies to this group too.

The terminology of Theorem~\ref{t: sporadic} and the preceding conjectures is all fairly standard in the world of group theory, with the possible exception of the word ``binary''. Roughly speaking an action is ``binary'' if the induced action on $\ell$-tuples can be deduced from the induced action on pairs (for any integer $\ell>2$); a formal definition of a ``binary permutation group'' is given below in \S\ref{s: preliminaries1}.

\subsection{Context and methods}

We will not spend much time here trying to motivate the study of ``binary permutation groups''. As will be clear on reading the definition of ``binary'' in \S\ref{s: preliminaries1}, this notion is a particular instance of the more general concept of ``arity'' or ``relational complexity''. These notions, which we define below in group theoretic terms, can also be formulated from a model theoretic point of view where they are best understood as properties of ``relational structures''. These connections, which run very deep, are explored at length in \cite{cherlin1}, to which we refer the interested reader.

Theorem~\ref{t: sporadic} settles Conjecture~\ref{conj: cherlin2} for one of the families given by the Classification of Finite Simple Groups. It is the third recent result in this direction: Conjecture~\ref{conj: cherlin2} has also been settled for groups with alternating socle \cite{gs_binary}, and for groups with socle a rank $1$ group of Lie type \cite{ghs_binary}. Work is ongoing for the groups that remain (groups with socle a group of Lie type of rank at least $2$) \cite{gls_binary}.

Our proof of Theorem~\ref{t: sporadic} builds on ideas developed in \cite{gs_binary} and \cite{ghs_binary}, in particular the notion of a ``strongly non-binary action''. In addition to this known approach, we also make use of a number of new lemmas -- we mention, in particular, Lemma~\ref{l: characters}, which connects the ``binariness'' of an action to a bound on the number of orbits in the induced action on $\ell$-tuples. These lemmas are gathered together in \S\ref{s: preliminaries1}.

In addition to these new lemmas, though, the current paper is very focused on adapting known facts about binary actions to create computational tests that can be applied using a computer algebra package like {\tt GAP} \cite{GAP4} or {\tt magma} \cite{magma}. This process of developing tests is explained in great detail in \S\ref{s: preliminaries2}.

In the final two sections we describe the outcome of these computations. In \S\ref{s: nonmonster} we are able to give a proof of Theorem~\ref{t: sporadic} for all of the sporadic groups barring the monster. In \S\ref{s: monster} we give a proof of Theorem~\ref{t: sporadic} for the monster. The sheer size of the monster means that some of the computational procedures that we exploit for the other groups are no longer available to us, and so our methods need to be refined to deal with this special case.

\subsection{Acknowledgments}
At a crucial juncture in our work on Theorem~\ref{t: sporadic}, we needed access to greater computational power -- this need was met by Tim Dokchitser who patiently ran and re-ran various scripts on the University of Bristol {\tt magma} cluster. We are very grateful to Tim -- without his help we would have struggled to complete this work. We are also grateful to an anonymous referee for a number of helpful comments and suggestions.

\section{Definitions and lemmas}\label{s: preliminaries1}

Throughout this section $G$ is a finite group acting (not necessarily faithfully) on a set $\Omega$ of cardinality $t$. Given a subset $\Lambda$ of $\Omega$, we write $G_\Lambda:=\{g\in G\mid \lambda^g\in\Lambda,\forall \lambda\in \Lambda\}$ for the set-wise stabilizer of $\Lambda$, $G_{(\Lambda)}:=\{g\in G\mid \lambda^g=\lambda, \forall\lambda\in \Lambda\}$ for the point-wise stabilizer of $\Lambda$, and $G^\Lambda$ for the permutation group induced on $\Lambda$ by the action of $G_\Lambda$. In particular, $G^\Lambda\cong G_\Lambda/G_{(\Lambda)}$.

Given a positive integer $r$, the group $G$ is called \textit{$r$-subtuple complete} with respect to the
pair of $n$-tuples $I, J \in \Omega^n$, if it contains elements that
map every subtuple of length $r$ in $I$ to the corresponding subtuple in
$J$ i.e. $$\textrm{for every } \{k_1, k_2, \dots, k_r\}\subseteq\{ 1,
\ldots, n\}, \textrm{ there exists } h \in G \textrm{ with }I_{k_i}^h=J_{k_i}, \textrm{ for every }i \in\{
1, \ldots, r\}.$$ Here $I_k$ denotes the $k^{\text{th}}$ element of tuple
$I$ and $I_k^g$ denotes the image of $I_k$ under the action of $g$.
Note that $n$-subtuple completeness simply requires the existence of
an element of $G$ mapping $I$ to $J$.

\begin{defn}{\rm
The action of $G$ is said to be of {\it arity $r$} if, for all
$n\in\mathbb{N}$ with $n\geq r$ and for all $n$-tuples $I, J \in \Omega^n$, $r$-subtuple
completeness (with respect to $I$ and $J$) implies $n$-subtuple completeness (with respect to $I$ and $J$). Note that in the literature the concept of ``arity'' is also known by the name ``relational complexity''. 

When the action of $G$ has arity 2, we say that the action of $G$ is {\it binary}. If $G$ is given to us as a permutation group, then we say that $G$ is a \emph{binary permutation group}. 

  }
  \end{defn}

A pair $(I,J)$ of $n$-tuples of $\Omega$ is called a {\it non-binary witness for the action of $G$ on $\Omega$}, if $G$ is $2$-subtuple complete with respect to $I$ and $J$, but not $n$-subtuple complete, that is, $I$ and $J$ are not $G$-conjugate.
To show that the action of $G$ on $\Omega$ is non-binary it is sufficient to find a non-binary witness $(I,J)$. 

We now recall some useful definitions introduced in~\cite{ghs_binary}. We say that the action of $G$ on $\Omega$ is \emph{strongly non-binary} if there exists a non-binary witness $(I,J)$ such that
\begin{itemize}
 \item $I$ and $J$ are $t$-tuples where $|\Omega|=t$;
 \item the entries of $I$ and $J$ comprise all the elements of $\Omega$.
\end{itemize}

We give a standard example, taken from~\cite{ghs_binary},  showing how strongly non-binary actions can arise.

\begin{example}\label{ex: snba2}{\rm
Let $G$ be a subgroup of  $\Sym(\Omega)$, let $g_1, g_2,\ldots,g_r$ be elements of $G$, and let $\tau,\eta_1,\ldots,\eta_r$ be elements of $\Sym(\Omega)$ with
\[
 g_1=\tau\eta_1,\,\,g_2=\tau\eta_2,\,\,\ldots,\,\,g_r=\tau\eta_r.
\]
Suppose that, for every $i\in \{1,\ldots,r\}$, the support of $\tau$ is disjoint from the support of $\eta_i$; moreover, suppose  that, for each $\omega\in\Omega$, there exists $i\in\{1,\ldots,r\}$ (which may depend upon $\omega$) with $\omega^{\eta_i}=\omega$. Suppose, in addition, $\tau\notin G$.
Now, writing $\Omega=\{\omega_1,\dots, \omega_t\}$, observe that
 \[
  ((\omega_1,\omega_2,\dots, \omega_t), (\omega_1^{\tau},\omega_2^{\tau}, \ldots,\omega_t^{\tau}))
 \]
is a non-binary witness. Thus the action of $G$ on $\Omega$ is strongly non-binary.}
\end{example}

The following lemma, taken from~\cite{ghs_binary}, shows a crucial property of the notion of strongly non-binary action: it allows one to argue ``inductively'' on set-stabilizers (see also Lemma~\ref{l: again0}). 

\begin{lem}\label{l: again12}
Let $\Omega$ be a  $G$-set and let $\Lambda \subseteq \Omega$. If $G^\Lambda$ is strongly non-binary, then $G$ is not binary in its action on $\Omega$.
\end{lem}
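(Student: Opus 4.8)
The plan is to prove the contrapositive-flavoured statement directly: assuming $G^\Lambda$ is strongly non-binary, we exhibit a non-binary witness for the action of $G$ on $\Omega$. By definition of strongly non-binary, there is a non-binary witness $(I,J)$ for the action of $G^\Lambda$ on $\Lambda$ in which $I$ and $J$ are $s$-tuples with $s=|\Lambda|$ and the entries of $I$ (equivalently $J$) run over all of $\Lambda$. The idea is to ``pad'' these tuples with a fixed enumeration of the remaining points of $\Omega$: writing $\Omega\setminus\Lambda=\{\mu_1,\dots,\mu_{t-s}\}$, I would set $\widehat I:=(I_1,\dots,I_s,\mu_1,\dots,\mu_{t-s})$ and $\widehat J:=(J_1,\dots,J_s,\mu_1,\dots,\mu_{t-s})$, both $t$-tuples in $\Omega^t$.

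First I would check that $G$ is $2$-subtuple complete with respect to $(\widehat I,\widehat J)$. Take any two coordinates. If both lie among the first $s$ coordinates, then since $(I,J)$ is a non-binary witness for $G^\Lambda$ there is $h\in G_\Lambda$ with $G_\Lambda$-image matching the required pair; lifting $h$ to $G$ does the job on those two coordinates. If both lie among the last $t-s$ coordinates, the identity of $G$ works, since those entries agree in $\widehat I$ and $\widehat J$. If one coordinate lies in each block, say coordinate $k\le s$ and coordinate $s+j$, then I need an element of $G$ sending $I_k\mapsto J_k$ and $\mu_j\mapsto\mu_j$. Here is the one genuinely substantive point: because the entries of $I$ comprise all of $\Lambda$, the entry $I_k$ is some element $\lambda\in\Lambda$, and $2$-subtuple completeness of $(I,J)$ applied to coordinate $k$ together with \emph{any} other coordinate $k'$ whose $I$-entry and $J$-entry are equal — such a $k'$ exists precisely because $J$ is also an enumeration of all of $\Lambda$, so pick $k'$ with $I_{k'}=J_{k'}$; if no such $k'$ exists one instead uses that $h\in G_\Lambda$ fixes $\Lambda$ setwise so already stabilizes $\mu_j$ — gives $h\in G_\Lambda$ with $\lambda^h=J_k$, and any element of $G_\Lambda$ fixes $\mu_j\in\Omega\setminus\Lambda$ setwise-complement-wise. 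Cleaner: every $h\in G_\Lambda$ maps $\Lambda$ to $\Lambda$ and hence $\Omega\setminus\Lambda$ to $\Omega\setminus\Lambda$, but that is not enough to fix $\mu_j$ pointwise. The correct fix is to observe that we are working with the \emph{induced} action $G^\Lambda\cong G_\Lambda/G_{(\Lambda)}$, so we should build the witness using $G_\Lambda$: the existence of $h\in G_\Lambda$ realizing the $2$-subtuple condition on coordinates $k,k'$ with $k,k'\le s$ is what strong non-binariness of $G^\Lambda$ gives, and for a mixed pair $(k,s+j)$ we use that the $2$-subtuple completeness of $(I,J)$ for $G^\Lambda$ lets us fix $I_k$ while the $\mu$'s, being outside $\Lambda$, are automatically handled because we may take $h\in G_\Lambda$ fixing the point $\mu_j$; this last claim is the step that needs the most care and I would argue it by noting that $G_\Lambda$ acting on $\Omega\setminus\Lambda$ need not be trivial, so instead one passes to $G_{(\Omega\setminus\Lambda)}\cap G_\Lambda$ or, more simply, observes that $(I,J)$ being a $\Lambda$-witness only constrains behaviour on $\Lambda$ and the padding coordinates can always be matched by choosing the lift of the $G^\Lambda$-element inside $G_{(\Omega\setminus\Lambda)}$ if such exists, which is exactly where the argument of \cite{ghs_binary} puts its weight.

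Next I would check that $G$ is \emph{not} $t$-subtuple complete with respect to $(\widehat I,\widehat J)$, i.e.\ that $\widehat I$ and $\widehat J$ are not $G$-conjugate. Suppose $g\in G$ sends $\widehat I$ to $\widehat J$. Then $g$ fixes each $\mu_j$ (coordinates $s+1,\dots,t$), so $g$ fixes $\Omega\setminus\Lambda$ pointwise, hence stabilizes $\Lambda$ setwise, so $g\in G_\Lambda$; and $g$ sends $I_k$ to $J_k$ for each $k\le s$, so the image $\bar g$ of $g$ in $G^\Lambda$ sends $I$ to $J$. This contradicts the fact that $(I,J)$ is a non-binary witness for $G^\Lambda$, in which $I$ and $J$ are by hypothesis not $G^\Lambda$-conjugate. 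Therefore $(\widehat I,\widehat J)$ is a non-binary witness for the action of $G$ on $\Omega$, and so $G$ is not binary.

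The main obstacle, as flagged above, is the mixed-pair verification of $2$-subtuple completeness for $\widehat I,\widehat J$: one must convert a $G^\Lambda$-element realizing a $2$-subtuple condition on $\Lambda$ into an honest element of $G$ that additionally fixes a chosen point of $\Omega\setminus\Lambda$, and the clean way to do this is to exploit that the two $\Lambda$-coordinates involved can be chosen so that one of them already has equal $I$- and $J$-entries (available because $I$ and $J$ each enumerate all of $\Lambda$), reducing the mixed case to an application of $2$-subtuple completeness within $\Lambda$ for a pair that is automatically compatible with fixing the outside point; everything else is bookkeeping. I would then remark that this lemma is what licenses the ``inductive on set-stabilizers'' strategy: to show $G$ is non-binary on $\Omega$ it suffices to exhibit \emph{some} $\Lambda\subseteq\Omega$ with $G^\Lambda$ strongly non-binary, typically with $\Lambda$ much smaller than $\Omega$.
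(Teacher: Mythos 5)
Your overall strategy---manufacture a witness for $G$ on $\Omega$ out of the strongly non-binary witness on $\Lambda$---is the right one, but the decision to pad $I$ and $J$ to $t$-tuples by appending a fixed enumeration $\mu_1,\dots,\mu_{t-s}$ of $\Omega\setminus\Lambda$ creates a gap that you flag repeatedly but never close, and that cannot be closed in general. For a mixed pair of coordinates $(k,\,s+j)$ you need a single element of $G$ sending $I_k$ to $J_k$ \emph{and} fixing the point $\mu_j$. Strong non-binariness of $G^\Lambda$ supplies, for each pair of coordinates inside $\Lambda$, some element of $G_\Lambda$ realizing that pair; nothing controls how such an element moves the points of $\Omega\setminus\Lambda$, and your fallback of ``choosing the lift inside $G_{(\Omega\setminus\Lambda)}$ if such exists'' is precisely the problem: such a lift need not exist, and none of the manoeuvres you sketch (picking a coordinate $k'$ with $I_{k'}=J_{k'}$, etc.) produces one. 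A useful sanity check that this is a real obstruction and not bookkeeping: if your padded pair $(\widehat I,\widehat J)$ were always a non-binary witness, then---since its entries comprise all of $\Omega$---$G$ would be strongly non-binary on $\Omega$, i.e.\ not $2$-closed. That is a much stronger conclusion than the lemma asserts, and the paper's whole use of this lemma is to detect non-binariness via small subsets $\Lambda$ precisely in situations where one cannot (or does not wish to) say anything about the $2$-closure of $G$ on $\Omega$ itself.

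The repair is simply to drop the padding: a non-binary witness is permitted to consist of $n$-tuples for \emph{any} $n$, not only $n=|\Omega|$. Write $\Lambda=\{\lambda_1,\dots,\lambda_\ell\}$; strong non-binariness gives $\sigma\in\Sym(\ell)$ such that $I=(\lambda_1,\dots,\lambda_\ell)$ and $J=(\lambda_{1^\sigma},\dots,\lambda_{\ell^\sigma})$ form a non-binary witness for $G_\Lambda$ acting on $\Lambda$. Use this same pair $(I,J)$ as the witness for $G$ on $\Omega$. Two-subtuple completeness is inherited for free, because every element of $G_\Lambda$ realizing a pair of coordinates is in particular an element of $G$. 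And no $g\in G$ maps $I$ to $J$: such a $g$ would permute the common entry set $\Lambda$, hence lie in $G_\Lambda$ and induce an element of $G^\Lambda$ carrying $I$ to $J$, contradicting the choice of $(I,J)$. This is exactly the paper's three-line argument; your non-conjugacy verification is essentially this last step and is fine---it is only the $2$-subtuple completeness of the padded tuples that fails.
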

\begin{proof}Write $\Lambda:=\{\lambda_1,\ldots,\lambda_\ell\}$ and assume that $G^\Lambda$ is strongly non-binary. Then there exists $\sigma\in \Sym(\ell)$ with $I:=(\lambda_1,\lambda_2,\ldots,\lambda_\ell)$ and  $J:=(\lambda_{1^\sigma},\lambda_{2^\sigma},\ldots,\lambda_{\ell^\sigma})$ a non-binary witness for the action of $G_\Lambda$ on $\Lambda$. Now, observe that $(I,J)$ is also a non-binary witness for the action of $G$ on $\Omega$ because any (putative) element $g$ of $G$ mapping $I$ to $J$ fixes $\Lambda$ set-wise and hence $g\in G_\Lambda$.  
\end{proof}

Next we need an observation, made first in \cite{ghs_binary}, that the existence of a strongly non-binary witness is related to the classic concept of $2$-\emph{closure} introduced by Wielandt~\cite{Wielandt}: given a permutation group $G$ on $\Omega$, the \emph{$2$-closure of $G$} is the set $$G^{(2)}:=\{\sigma\in \Sym(\Omega)\mid \forall (\omega_1,\omega_2)\in \Omega\times \Omega, \textrm{there exists }g_{\omega_1\omega_2}\in G \textrm{ with }\omega_1^\sigma=\omega_1^{g_{\omega_1\omega_2}}, \omega_2^\sigma=\omega_2^{g_{\omega_1\omega_2}}\},$$
that is, $G^{(2)}$ is the largest subgroup of $\Sym(\Omega)$ having the same orbitals as $G$. The group $G$ is said to be $2$-\emph{closed} if and only if $G=G^{(2)}$. 

\begin{lem}\label{l: fedup}
Let $G$ be a permutation group on $\Omega$. Then $G$ is strongly non-binary if and only if $G$ is not $2$-closed.
\end{lem}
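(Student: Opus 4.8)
The plan is to build, in both directions, an explicit dictionary between elements of $G^{(2)}\setminus G$ and strongly non-binary witnesses of the shape demanded by the definition, exploiting the observation that a $t$-tuple whose entries exhaust $\Omega$ carries exactly the same information as a permutation of $\Omega$. Throughout one may assume $t=|\Omega|\ge 2$: if $t\le 1$ then $\Sym(\Omega)$ is trivial, so $G=\Sym(\Omega)=G^{(2)}$ is $2$-closed, and no non-binary witness exists, so the equivalence holds vacuously.

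First I would treat the implication that $G$ not $2$-closed implies $G$ strongly non-binary. Choose $\sigma\in G^{(2)}\setminus G$, enumerate $\Omega=\{\omega_1,\dots,\omega_t\}$, and set $I:=(\omega_1,\dots,\omega_t)$ and $J:=(\omega_1^{\sigma},\dots,\omega_t^{\sigma})$. Since $\sigma$ permutes $\Omega$, the entries of $J$ again exhaust $\Omega$, so $(I,J)$ has the required form. The defining property of $G^{(2)}$ says precisely that for each pair $(\omega_i,\omega_j)$ there is $g\in G$ agreeing with $\sigma$ on $\{\omega_i,\omega_j\}$ (the length-one subtuples being the case $i=j$), and this is exactly $2$-subtuple completeness of $(I,J)$. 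On the other hand, if some $g\in G$ had $\omega_k^{g}=\omega_k^{\sigma}$ for every $k$, then $g$ and $\sigma$ would agree on all of $\Omega$, forcing $\sigma=g\in G$, a contradiction; hence $(I,J)$ is not $t$-subtuple complete. Thus $(I,J)$ is a non-binary witness of the special shape, and $G$ is strongly non-binary.

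For the converse, suppose $(I,J)$ is a strongly non-binary witness, so $I$ and $J$ are $t$-tuples whose entries each exhaust $\Omega$; in particular $k\mapsto I_k$ and $k\mapsto J_k$ are bijections $\{1,\dots,t\}\to\Omega$. Hence there is a unique $\sigma\in\Sym(\Omega)$ with $I_k^{\sigma}=J_k$ for all $k$. Now $2$-subtuple completeness of $(I,J)$ translates directly into $\sigma\in G^{(2)}$: given $\omega,\omega'\in\Omega$, pick the positions $k,k'$ with $I_k=\omega$, $I_{k'}=\omega'$ and apply $2$-subtuple completeness to obtain $g\in G$ with $\omega^{g}=I_k^{g}=J_k=\omega^{\sigma}$ and, likewise, $\omega'^{g}=\omega'^{\sigma}$ (the diagonal case $\omega=\omega'$ being the singleton subtuple). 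So $\sigma$ has the same orbitals as $G$. Finally $\sigma\notin G$, since otherwise $\sigma$ itself would witness $t$-subtuple completeness of $(I,J)$, contradicting that $(I,J)$ is a non-binary witness. Therefore $\sigma\in G^{(2)}\setminus G$ and $G$ is not $2$-closed.

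The argument is essentially bookkeeping and I do not expect a serious obstacle; the one point needing care is the bijective correspondence between $t$-tuples exhausting $\Omega$ and elements of $\Sym(\Omega)$, together with verifying that $2$-subtuple completeness genuinely recovers \emph{all} orbitals of $G$ — including the diagonal ones — so that one lands in $G^{(2)}$ rather than in some strictly larger overgroup.
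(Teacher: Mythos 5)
Your proof is correct and follows exactly the paper's approach: identify a $t$-tuple exhausting $\Omega$ with a permutation of $\Omega$, and translate $2$-subtuple completeness into membership in $G^{(2)}$ and failure of $t$-subtuple completeness into $\sigma\notin G$. The paper states only the forward direction explicitly and says ``the converse is similar''; you have simply written out both directions with the bookkeeping made explicit.
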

\begin{proof}
Write $\Omega:=\{\omega_1,\ldots,\omega_t\}$. If $G$ is not $2$-closed, then there exists $\sigma\in G^{(2)}\setminus G$. Set $I:=(\omega_1,\ldots,\omega_t)$ and $J:=I^\sigma=(\omega_1^\sigma,\ldots,\omega_t^\sigma)$; observe that $I$ and $J$ are $2$-subtuple complete (because $\sigma\in G^{(2)}$) and are not $G$-conjugate (because $\sigma\notin G$). Thus $(I,J)$ is a strongly non-binary witness. The converse is similar.
\end{proof}

Our next two lemmas make use of Lemma~\ref{l: again12} and Example~\ref{ex: snba2} to yield easy criteria for showing that a permutation group is not binary.

\begin{lem}\label{l: M2}
Let $G$ be a transitive permutation group on $\Omega$, let $\alpha\in \Omega$ and let $p$ be a prime with $p$ dividing both $|\Omega|$ and $|G_\alpha|$ and with $p^2$ not dividing $|G_\alpha|$. Suppose that $G$ contains an elementary abelian $p$-subgroup $V=\langle g,h\rangle$ with $g\in G_\alpha$, with $h$ and $gh$ conjugate to $g$ via $G$. Then $G$ is not binary. 
\end{lem}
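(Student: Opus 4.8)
The plan is to exhibit a strongly non-binary action on a suitable subset $\Lambda$ of $\Omega$ and then invoke Lemma~\ref{l: again12}. First I would use the hypothesis that $p$ divides $|\Omega|$ and that $g\in G_\alpha$ has order $p$: since $p \mid |\Omega|$ and $G$ is transitive, $|G_\alpha|$ has index divisible by $p$, so $g$, acting on $\Omega$, cannot fix all points; combined with $o(g)=p$ this forces $g$ to act on $\Omega\setminus\Fix(g)$ as a product of $p$-cycles, and likewise for $h$ and $gh$, since all three are $G$-conjugate to $g$ and hence have the same cycle type on $\Omega$. The natural candidate for $\Lambda$ is a single $\langle V\rangle$-orbit, or more precisely the support of $\tau := g$ restricted appropriately, and the idea is to realize the configuration of Example~\ref{ex: snba2} with $r=2$: write $g = \tau\eta_1$ and $h^{-1} = \tau\eta_2$ (or some such pairing) so that $gh$ relates the two factorizations, and check the support-disjointness and fixed-point-covering conditions on an $\langle V\rangle$-orbit.

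The key step is to choose $\Lambda$ and the permutations $\tau,\eta_1,\eta_2$ correctly. Here is the intended shape. Let $\Delta$ be an orbit of $V=\langle g,h\rangle$ on $\Omega$ on which $V$ does not act with $g$ fixing every point — such a $\Delta$ exists because $g$ does not fix all of $\Omega$. On $\Delta$, since $V$ is elementary abelian of order $p^2$, the orbit $\Delta$ has size $p$ or $p^2$; because $p^2\nmid |G_\alpha|$ and $g\in G_\alpha$, one shows the relevant orbits have size $p$, on each of which exactly one of the three subgroups $\langle g\rangle, \langle h\rangle, \langle gh\rangle$ of index $p$ in $V$ acts trivially. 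Partition the union $\Lambda$ of all $V$-orbits of size $p$ into three classes according to which of these subgroups is the kernel: $\Lambda_g, \Lambda_h, \Lambda_{gh}$. On $\Lambda_h$ (where $h$ acts trivially) $g$ and $gh$ act identically; on $\Lambda_{gh}$ (where $gh$ acts trivially) $g$ and $h^{-1}$ act identically; on $\Lambda_g$, $g$ acts trivially. Now set $\tau := g|_\Lambda$; define $\eta_1 := g^{-1}h|_\Lambda$ supported on $\Lambda_h\cup\Lambda_{gh}$ — wait, one must be careful — instead define the two "partial" permutations so that each agrees with $\tau$ off a set where $\tau$ already fixes points, which is exactly the Example~\ref{ex: snba2} setup; the content is that $g\notin G^\Lambda$ (equivalently the element of $\Sym(\Lambda)$ we build does not lie in $G^\Lambda$), for which we use $g\in G_\alpha$ with $\alpha$ chosen in $\Lambda_g$.

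The main obstacle, as I see it, is verifying the condition $\tau\notin G^\Lambda$ of Example~\ref{ex: snba2}, i.e. that the permutation of $\Lambda$ we construct (which agrees with $g$ on part of $\Lambda$ but not all) is genuinely not induced by any element of $G_\Lambda$. The hypothesis $p^2\nmid|G_\alpha|$ is what should make this work: it pins down the Sylow $p$-structure of $G_\alpha$ tightly enough that one can rule out a "correcting" element. A secondary technical point is checking that $\alpha$ can be taken inside the appropriate piece $\Lambda_g$ so that the built permutation moves the point-stabilizer incorrectly; this uses that $p \mid |G_\alpha|$ (so $g$ really is a nontrivial $p$-element of the stabilizer) together with transitivity to locate $\alpha$ and its translates. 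Once the Example~\ref{ex: snba2} hypotheses are confirmed on $\Lambda$, the conclusion that $G^\Lambda$ is strongly non-binary is immediate, and Lemma~\ref{l: again12} finishes the proof that $G$ is not binary.
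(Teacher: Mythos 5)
Your overall strategy is the same as the paper's: both arguments build a subset $\Lambda$ out of size-$p$ orbits of $V$, sorted into three types according to which of $\langle g\rangle$, $\langle h\rangle$, $\langle gh\rangle$ is the kernel of the orbit action, and then realise the configuration of Example~\ref{ex: snba2} with $r=2$, taking $\tau$ to be the action of $g$ on the $\Lambda_h$-part, $g_1=g^\Lambda=\tau\eta_1$ with $\eta_1$ supported on $\Lambda_{gh}$, and $g_2=(gh)^\Lambda=\tau\eta_2$ with $\eta_2$ supported on $\Lambda_g$. (The paper takes $\Lambda$ to be just three such orbits, of size $3p$ in total, namely $\alpha^V$, $(\alpha^x)^V$ and $(\alpha^y)^V$ where $h=g^x$ and $gh=g^y$; this choice simultaneously proves that all three types of orbit occur, which is exactly where the conjugacy hypotheses on $h$ and $gh$ are used. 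Your larger $\Lambda$ works equally well once you note, as you essentially do via cycle types, that $\Fix(h)$ and $\Fix(gh)$ are nonempty and cannot meet $\Fix(g)$.)

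The genuine deficiency is that you stop at the decisive step. You correctly identify that everything hinges on showing $\tau\notin G^\Lambda$, but you only assert that $p^2\nmid|G_\alpha|$ ``should make this work'' without giving the argument; you also momentarily misstate the condition as ``$g\notin G^\Lambda$'', which is false since $g\in G_\Lambda$. The missing argument is short and you should supply it: suppose $f\in G_\Lambda$ induces $\tau$ on $\Lambda$. Since $\alpha\in\Lambda_g$ and $\tau$ is trivial off $\Lambda_h$, the element $f$ fixes $\alpha$, so $\langle g,f\rangle\le G_\alpha$. But the image of $\langle g,f\rangle$ in $\Sym(\Lambda)$ contains both $\tau$ (from $f$) and $\eta_1=\tau^{-1}g^\Lambda$, which are nontrivial commuting $p$-elements with disjoint supports, hence generate a subgroup of order $p^2$; therefore $p^2$ divides $|\langle g,f\rangle|$ and hence $|G_\alpha|$, contradicting the hypothesis. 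With that paragraph added (and the nonemptiness of $\Lambda_h$ and $\Lambda_{gh}$ made explicit), your proof is complete and coincides in substance with the paper's.
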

\begin{proof}
Let $g\in G_\alpha$ and let $h\in g^G$ with $\langle g,h\rangle$ an elementary abelian $p$-subgroup of $G$ of order $p^2$ with $gh$ also conjugate to $g$ via $G$. In particular, $h=g^x$, for some $x\in G$. Write $\alpha_0:=\alpha$ and  $\alpha_{p}:=\alpha^x$.

Since $g\in G_{\alpha_0}$ and $h\in G_{\alpha_{p}}$ commute, $\alpha_0^{h^i}$ is fixed by $g$ and $\alpha_{p}^{g^i}$ is fixed by $h$, for every $i$. Write $\alpha_i:=\alpha_0^{h^i}$ and $\alpha_{p+i}:=\alpha_p^{g^i}$, for every $i\in \{0,\ldots,p-1\}$. Moreover, $g$ acts as a $p$-cycle on $\{\alpha_p,\ldots,\alpha_{2p-1}\}$ and $h$ acts as a $p$-cycle on $\{\alpha_0,\ldots,\alpha_{p-1}\}$.

Since $gh$ is conjugate to $g$ via an element of $G$, there exists $y\in G$ with $gh=g^y$. Write $\alpha_{2p}=\alpha^{y}$. Observe that $gh$ fixes $(\alpha_{2p})^{g^{-i}}=\alpha_{2p}^{h^i}$ for every $i$. Write $\alpha_{2p+i}:=\alpha_{2p}^{g^i}$, for every $i\in \{0,\ldots,p-1\}$. Thus $g$ and $h$ act as inverse $p$-cycles on $\{\alpha_{2p},\ldots,\alpha_{3p-1}\}$. 

Write $\Lambda:=\{\alpha_0,\ldots,\alpha_{3p-1}\}$. We have
\begin{align*}
g^\Lambda&=(\alpha_{p},\ldots,\alpha_{2p-1})(\alpha_{3p-1},\ldots,\alpha_{2p}),\\
h^\Lambda&=(\alpha_{0},\ldots,\alpha_{p-1})(\alpha_{2p},\ldots,\alpha_{3p-1}),\\
(gh)^\Lambda&=(\alpha_{0},\ldots,\alpha_{p-1})(\alpha_{p},\ldots,\alpha_{2p-1}).\\
\end{align*}

If $G^\Lambda$ is  strongly non-binary, then $G$ is not binary by Lemma~\ref{l: again12}. Assume that $G^\Lambda$ is not strongly non-binary. Then, in view of Example~\ref{ex: snba2}, there exists $f\in G$ with 
$f^\Lambda=(\alpha_p,\ldots,\alpha_{2p-1}).$
This is a contradiction, because by hypothesis $|G_\alpha|$ is not divisible by $p^2$ but $\langle g,f\rangle$ has order divisible by $p^2$ and fixes $\alpha_0=\alpha$.
\end{proof}

\begin{lem}\label{l: added}
Let $G$ be a permutation group on $\Omega$ and suppose that $g$ and $h$ are elements of $G$ of order $p$ where $p$ is a prime such that $g$, $h$ and $gh^{-1}$ are all $G$-conjugate. Suppose that $V=\langle g, h\rangle$ is elementary-abelian of order $p^2$. Suppose, 
finally, that $G$ does not contain any elements of order $p$ that fix more points of $\Omega$ than $g$. If $|\Fix(V)|<|\Fix(g)|$, then $G$ is not binary.
\end{lem}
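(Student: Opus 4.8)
The plan is to adapt the argument of Lemma~\ref{l: M2}: I will exhibit a subset $\Lambda$ of $\Omega$ for which either $G^\Lambda$ is strongly non-binary — whence $G$ is not binary by Lemma~\ref{l: again12} — or else a particular element of $G$ of order $p$ is forced to exist which fixes strictly more points than $g$, contradicting the last hypothesis.

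First I would extract the numerical content of the hypotheses. As $g$, $h$ and $gh^{-1}$ are pairwise $G$-conjugate, $|\Fix(g)|=|\Fix(h)|=|\Fix(gh^{-1})|$; setting $a:=|\Fix(g)|-|\Fix(V)|$, the assumption $|\Fix(V)|<|\Fix(g)|$ is precisely $a>0$. Next I would decompose $\Omega$ under the action of $V=\langle g,h\rangle$: every $V$-orbit has size $1$, $p$ or $p^2$, and a $V$-orbit of size $p$ is fixed pointwise by exactly one of the $p+1$ subgroups of order $p$ of $V$, namely its kernel. Let $A_g$, $A_h$, $A_{gh^{-1}}$ be the unions of the size-$p$ orbits with kernel $\langle g\rangle$, $\langle h\rangle$, $\langle gh^{-1}\rangle$ respectively. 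Since $\langle g\rangle$, $\langle h\rangle$, $\langle gh^{-1}\rangle$ are three distinct subgroups, $A_g$, $A_h$, $A_{gh^{-1}}$ are pairwise disjoint and disjoint from $\Fix(V)$; and from $|\Fix(g)|=|\Fix(V)|+|A_g|$ together with the conjugacy we get $|A_g|=|A_h|=|A_{gh^{-1}}|=a>0$. On a size-$p$ orbit with kernel $\langle gh^{-1}\rangle$ the elements $g$ and $h$ induce the same $p$-cycle (as $gh^{-1}$ is trivial there), while on a size-$p$ orbit with kernel $\langle h\rangle$ the element $g$ induces a $p$-cycle and $h$ is trivial, and symmetrically with $g,h$ interchanged on $A_g$.

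Now I would set $\Lambda:=\Fix(V)\cup A_g\cup A_h\cup A_{gh^{-1}}$. Each piece is $V$-invariant, so $g,h\in G_\Lambda$, and by the previous step
\[
g^\Lambda=\tau\eta_1,\qquad h^\Lambda=\tau\eta_2,
\]
where $\tau$ is the common restriction of $g$ and $h$ to $A_{gh^{-1}}$ (a nontrivial product of $p$-cycles), $\eta_1$ is the restriction of $g$ to $A_h$ and $\eta_2$ the restriction of $h$ to $A_g$ (each a product of $p$-cycles, and commuting with $\tau$ since the supports $A_{gh^{-1}}$, $A_h$, $A_g$ are pairwise disjoint). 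Every point of $\Lambda$ lies outside $A_h$ or outside $A_g$, hence is fixed by $\eta_1$ or by $\eta_2$. Thus this data meets all the requirements of Example~\ref{ex: snba2} for the action of $G^\Lambda$ on $\Lambda$, with the possible exception of the condition $\tau\notin G^\Lambda$; so if $\tau\notin G^\Lambda$ then $G^\Lambda$ is strongly non-binary and Lemma~\ref{l: again12} finishes the proof.

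The main obstacle is to rule out the remaining case $\tau\in G^\Lambda$, and this is where the final hypothesis must be used. Take $f\in G_\Lambda$ with $f^\Lambda=\tau$. Because $\tau$ has order $p$, the $p$-part of $f$ (a power of $f$) still induces $\tau$ on $\Lambda$, so I may assume $f$ is a $p$-element. If $f$ has order $p$, then $f$ fixes pointwise $\Fix(V)\cup A_g\cup A_h$, the part of $\Lambda$ disjoint from the support of $\tau$, a set of size $|\Fix(V)|+2a$; if $f$ has order $p^k$ with $k\geq 2$, then $f^{p^{k-1}}$ has order $p$ and induces the identity on $\Lambda$, hence fixes all $|\Fix(V)|+3a$ points of $\Lambda$. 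Either way $G$ contains an element of order $p$ fixing at least $|\Fix(V)|+2a>|\Fix(V)|+a=|\Fix(g)|$ points, where the strict inequality uses $a>0$; this contradicts the hypothesis that no element of $G$ of order $p$ fixes more points than $g$. The delicate points will be this passage to a genuine element of order $p$ — the element realizing $\tau$ inside $G^\Lambda$ need not itself have order $p$ — together with keeping track of the fixed-point counts so that the worst case $k=1$ still yields a strict inequality.
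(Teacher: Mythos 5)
Your proof is correct and is essentially the paper's own argument: your $\Lambda=\Fix(V)\cup A_g\cup A_h\cup A_{gh^{-1}}$ is exactly the paper's $\Fix(g)\cup\Fix(h)\cup\Fix(gh^{-1})$, your $\tau,\eta_1,\eta_2$ are its $\tau_1,\tau_2,\tau_3$, and the appeal to Example~\ref{ex: snba2} plus the final contradiction with the maximal-fixity hypothesis are the same. Your only (welcome) refinement is spelling out the reduction of a putative $f$ with $f^\Lambda=\tau$ to a genuine element of order $p$ and checking both cases of the fixed-point count, which the paper compresses into one sentence.
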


We remark that there are well-known formulae that we can use to calculate $\Fix(V)$ and $|\Fix(g)|$ when $G$ is transitive (see for instance~\cite[Lemma~$2.5$]{LiebeckSaxl}). Suppose that $M$ is the stabilizer of a point in $\Omega$; then we have
\begin{equation}\label{e: fora}
 |\Fix_\Omega(g)| = \frac{|\Omega|\cdot |M\cap g^G|}{|g^G|},\qquad
 |\Fix_\Omega(V)| = \frac{|\Omega|\cdot |\{V^g\mid g\in G,V^g\le M\}|}{|V^G|}.
\end{equation}

\begin{proof}
We let
\[
 \Lambda:=\Fix(g)\cup\Fix(h)\cup\Fix(gh^{-1}).
\]
Observe, first, that $\Lambda$, $\Fix(g)$, $\Fix(h)$ and $\Fix(gh^{-1})$ are $g$-invariant and $h$-invariant. Observe, second, that 
\[
\Fix(g)\cap\Fix(h)=\Fix(g)\cap\Fix(gh^{-1})=\Fix(h)\cap\Fix(gh^{-1})=\Fix(V). 
\]

Write $\tau_1$ for the permutation induced by $g$ on $\Fix(gh^{-1})$, $\tau_2$ for the permutation induced by $g$ on $\Fix(h)$, and $\tau_3$ for the permutation induced by $h$ on $\Fix(g)$ (observe that $\tau_i$'s are non trivial as $gh^{-1}$, $h$ and $g$ are conjugate).
Since $|\Fix(V)|<|\Fix(g)|$, we conclude that $\tau_1,\tau_2$ and $\tau_3$ are disjoint non-trivial permutations. What is more, $g$ induces the permutation $\tau_1\tau_2$ on $\Lambda$, while $h$ induces the permutation $\tau_1\tau_3$ on $\Lambda$. 

In view of Example~\ref{ex: snba2}, $G^\Lambda$ is strongly non-binary provided there is no element $f\in G_\Lambda$ that induces the permutation $\tau_1$. Arguing  by contradiction, if such an element $f$ exists, then $f$ has order divisible by $p$ and $f^{o(f)/p}$ is a $p$-element fixing more points than $g$, which is a contradiction.
Thus $G^\Lambda$ is strongly non-binary and $G$ is not binary by Lemma~\ref{l: again12}.
\end{proof}

For the rest of this section we assume that $G$ is transitive. Given $\ell\in\mathbb{N}\setminus\{0\}$, we denote by $\Omega^{(\ell)}$ the subset of the Cartesian product $\Omega^\ell$ consisting of the $\ell$-tuples $(\omega_1,\ldots,\omega_\ell)$ with $\omega_i\ne \omega_j$, for every two distinct elements $i,j\in \{1,\ldots,\ell\}$. We denote by $r_\ell(G)$  the number of orbits of $G$ on $\Omega^{(\ell)}$.

Let $\pi:G\to\mathbb{N}$ be the permutation character of $G$, that is, $\pi(g)=\fix_\Omega(g)$ where $\fix_{\Omega}(g)$ is the cardinality of the fixed point set $\Fix_{\Omega}(g):=\{\omega\in \Omega\mid \omega^g=\omega\}$ of $g$. From the Orbit Counting Lemma, we have
\begin{align*}
r_\ell(G)&=\frac{1}{|G|}\sum_{g\in G}\fix_\Omega(g)(\fix_\Omega(g)-1)\cdots (\fix_\Omega-(\ell-1))\\
&=\langle \pi(\pi-1)\cdots (\pi-(\ell-1)),1\rangle_G,
\end{align*}
where $1$ is the principal character of $G$ and $\langle \cdot,\cdot\rangle_G$ is the natural Hermitian product on the space of $\mathbb{C}$-class functions of $G$.

\begin{lem}\label{l: characters}
If $G$ is transitive and binary, then $r_\ell(G)\le r_2(G)^{\ell(\ell-1)/2}$ for each $\ell\in\mathbb{N}$.
\end{lem}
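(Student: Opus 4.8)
The plan is to construct an explicit injection from the set of $G$-orbits on $\Omega^{(\ell)}$ into a set of cardinality exactly $r_2(G)^{\ell(\ell-1)/2}$; the bound then follows by counting. The degenerate cases are immediate: for $\ell\le 1$ the inequality reads $1\le 1$ (using transitivity when $\ell=1$), and for $\ell>|\Omega|$ we have $\Omega^{(\ell)}=\emptyset$, so $r_\ell(G)=0$. Hence we may assume $2\le \ell\le |\Omega|$.

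First I would attach to each tuple $I=(\omega_1,\dots,\omega_\ell)\in\Omega^{(\ell)}$ its \emph{type}: the function $\theta_I$ that sends each pair $\{i,j\}$ with $i<j$ to the $G$-orbit of the ordered pair $(\omega_i,\omega_j)\in\Omega^{(2)}$. There are $\ell(\ell-1)/2$ such pairs of indices and $r_2(G)$ possible values for each, so $\theta_I$ ranges over a set of size $r_2(G)^{\ell(\ell-1)/2}$. Since $G$ acts coordinatewise on $\Omega^{(\ell)}$ and on $\Omega^{(2)}$, tuples in the same $G$-orbit have the same type, so $\theta$ induces a well-defined map on $G$-orbits. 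The crux is that this induced map is injective, and this is exactly where binariness is used: if $J=(\omega_1',\dots,\omega_\ell')$ has $\theta_J=\theta_I$, then for each pair $\{i,j\}$ there is $h\in G$ with $\omega_i^h=\omega_i'$ and $\omega_j^h=\omega_j'$, that is, $G$ is $2$-subtuple complete with respect to $I$ and $J$; since $G$ is binary this forces $\ell$-subtuple completeness, which is precisely the statement that $I$ and $J$ are $G$-conjugate. Therefore distinct $G$-orbits on $\Omega^{(\ell)}$ receive distinct types, and $r_\ell(G)\le r_2(G)^{\ell(\ell-1)/2}$.

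I do not expect any serious obstacle: the argument is essentially the observation that, for a binary group, the $G$-orbit of an $\ell$-tuple of distinct points is determined by the list of $G$-orbits of its coordinate pairs, together with the standard expression for $r_\ell(G)$ via the Orbit Counting Lemma recorded above. The one point to handle with care is that the type must record orbits of \emph{ordered} pairs on $\Omega^{(2)}$ taken in index order (not orbits of unordered pairs), because $2$-subtuple completeness requires an element mapping $I_k$ to $J_k$ index by index; with that convention fixed, the counting step above is exact.
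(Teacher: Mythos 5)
Your proof is correct and is essentially the paper's argument: the authors prove the same bound by taking orbit representatives on $\Omega^{(\ell)}$ and applying the pigeonhole principle to the $G$-orbits of their coordinate pairs, which is exactly the contrapositive of your injectivity claim for the ``type'' map (indeed, the remark following the lemma in the paper describes your injection verbatim). Your attention to recording orbits of \emph{ordered} pairs, and the handling of the degenerate cases, are both fine.
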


Note that this lemma is, in effect, an immediate consequence of the fact that, for a binary action, the orbits on pairs ``determine'' orbits on $\ell$-tuples. Thus, to uniquely determine the orbit of a particular $\ell$-tuple, it is enough to specify the orbits of all $\binom{\ell}{2}$ pairs making up the $\ell$-tuple.

\begin{proof}
We write $r_2:=r_2(G)$ and $r_\ell:=r_\ell(G)$ and we assume that $r_\ell>r_2^{(\ell-1)\ell/2}$ for some $\ell\in\mathbb{N}$. Clearly, $\ell> 2$.

 Let $$(\omega_{1,1},\ldots,\omega_{1,\ell}),\ldots,(\omega_{r_\ell,1},\ldots,\omega_{r_\ell,\ell})$$ be a family of representatives for the $G$-orbits on $\Omega^{(\ell)}$. From the pigeon-hole principle, at least $r_\ell/r_2$ of these elements have the first two coordinates in the same $G$-orbit. Formally, there exists $\kappa\in\mathbb{N}$ with $\kappa\ge r_\ell/r_2$ and a subset $\{i_1,\ldots,i_\kappa\}$ of $\{1,\ldots,r_\ell\}$ of cardinality $\kappa$ such that the $\kappa$ pairs
$$(\omega_{i_1,1},\omega_{i_1,2}),\ldots,(\omega_{i_\kappa,1},\omega_{i_\kappa,2})$$
are in the same $G$-orbit. By considering all possible pairs of coordinates, this argument can be easily generalized. Indeed, from the pigeon-hole principle, there exists $\kappa$ with $\kappa\ge r_\ell/r_2^{(\ell-1)\ell/2}>1$ and a subset $\{i_1,\ldots,i_\kappa\}$ of $\{1,\ldots,r_\ell\}$ of cardinality $\kappa$ such that, for each $1\le u<v\le \ell$, the $\kappa$ pairs
$$(\omega_{i_1,u},\omega_{i_1,v}),\ldots,(\omega_{i_\kappa,u},\omega_{i_\kappa,v})$$
are in the same $G$-orbit. In other words, the $\ell$-tuples
$$(\omega_{i_1,1},\ldots,\omega_{i_1,\ell}),\ldots,(\omega_{i_\kappa,1},\ldots,\omega_{i_\kappa,\ell})$$
are $2$-subtuple complete. Since $G$ is binary, these $\ell$-tuples must be in the same $G$-orbit, contradicting $\kappa>1$.
\end{proof}

Observe that when $r_2(G)=1$, that is, $G$ is $2$-transitive,  Lemma~\ref{l: characters} yields $r_\ell(G)=1$ for every $\ell\in \{2,\ldots,|\Omega|\}$. Therefore $G=\Sym(\Omega)$ is the only $2$-transitive binary group. 

\begin{lem}\label{l: again0}
Let $G$ be transitive, let $\alpha$ be a point of $\Omega$ and let $\Lambda\subseteq \Omega$ be a $G_\alpha$-orbit. If  $G$ is binary, then $G_\alpha^\Lambda$ is binary. In particular, if $g\in G$ and the action of $G_\alpha$ on the right cosets of $G_\alpha\cap G_\alpha^g$ in $G_\alpha$ is not binary, then $G$ is not binary.
\end{lem}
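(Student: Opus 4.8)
The plan is to prove the first assertion via its contrapositive, and then to deduce the ``in particular'' clause by recognising the coset action in the statement as the action of $G_\alpha$ on an orbit of the shape of $\Lambda$. So the work is really all in the first sentence, and there the idea is simply to take a non-binary witness for $G_\alpha^\Lambda$ and prepend the point $\alpha$ to turn it into a non-binary witness for $G$.

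For the contrapositive of the first assertion, suppose that $G_\alpha^\Lambda$ is not binary. Since $\Lambda$ is a $G_\alpha$-orbit it is $G_\alpha$-invariant, so $(G_\alpha)_\Lambda=G_\alpha$ and hence $G_\alpha^\Lambda=G_\alpha/(G_\alpha)_{(\Lambda)}$; in particular every element of $G_\alpha^\Lambda$ is the restriction to $\Lambda$ of some element of $G_\alpha$. Fix a non-binary witness $(\bar I,\bar J)$ for the action of $G_\alpha^\Lambda$ on $\Lambda$, with $\bar I=(I_1,\dots,I_n)$ and $\bar J=(J_1,\dots,J_n)$, all $I_i,J_i\in\Lambda$, and $n\ge 2$ (a witness of length $1$ is impossible because $G_\alpha^\Lambda$ is transitive on $\Lambda$). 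I would then set $I:=(\alpha,I_1,\dots,I_n)$ and $J:=(\alpha,J_1,\dots,J_n)$ in $\Omega^{n+1}$ and show that $(I,J)$ is a non-binary witness for the action of $G$ on $\Omega$, which by the definitions in \S\ref{s: preliminaries1} is exactly what is needed.

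The verification splits in two. To see that $(I,J)$ is $2$-subtuple complete with respect to $G$, run over the two-element subsets of $\{0,1,\dots,n\}$, the index $0$ labelling the $\alpha$-entry. For a subset $\{i,j\}$ with $i,j\ge 1$, the $2$-subtuple completeness of $(\bar I,\bar J)$ supplies $h\in G_\alpha^\Lambda$ with $(I_i,I_j)^h=(J_i,J_j)$; lifting $h$ to $\tilde h\in G_\alpha\le G$ and using that $\tilde h$ acts on $\Lambda$ as $h$ (and $I_i,I_j,J_i,J_j\in\Lambda$) gives $(I_i,I_j)^{\tilde h}=(J_i,J_j)$. For a subset $\{0,j\}$, note that $2$-subtuple completeness of $(\bar I,\bar J)$ forces $1$-subtuple completeness (compare the index $j$ with any $k\ne j$), so there is $h\in G_\alpha^\Lambda$ with $I_j^h=J_j$; any lift $\tilde h\in G_\alpha$ fixes $\alpha$ and sends $I_j$ to $J_j$. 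To see that $I$ and $J$ are not $G$-conjugate, suppose $x\in G$ with $I^x=J$; then $\alpha^x=\alpha$, so $x\in G_\alpha$, whence $x$ preserves $\Lambda$ and $x^\Lambda\in G_\alpha^\Lambda$ sends $\bar I$ to $\bar J$, contradicting that $(\bar I,\bar J)$ is a non-binary witness. This proves the first assertion.

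For the ``in particular'' clause, fix $g\in G$ and put $H:=G_\alpha\cap G_\alpha^g$ and $\beta:=\alpha^g$. The key computation is $(G_\alpha)_\beta=H$: for $x\in G_\alpha$ we have $\beta^x=\beta$ iff $\alpha^{gxg^{-1}}=\alpha$ iff $gxg^{-1}\in G_\alpha$ iff $x\in g^{-1}G_\alpha g=G_\alpha^g$, i.e.\ iff $x\in H$. Hence $Hx\mapsto\beta^x$ is a well-defined $G_\alpha$-equivariant bijection from the right cosets of $H$ in $G_\alpha$ onto the $G_\alpha$-orbit $\Lambda:=\beta^{G_\alpha}$, so the coset action of $G_\alpha$ is permutation isomorphic to the action of $G_\alpha$ on $\Lambda$; and this action has the same orbits on tuples as $G_\alpha^\Lambda$, hence is binary precisely when $G_\alpha^\Lambda$ is. Thus, if the coset action is not binary then $G_\alpha^\Lambda$ is not binary, and the first assertion yields that $G$ is not binary. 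I do not anticipate a genuine obstacle: the argument is bookkeeping around the prepended coordinate. The only two points needing care are that elements of $G_\alpha^\Lambda$ can be lifted back to $G_\alpha$ — which is exactly where it matters that $\Lambda$ is a $G_\alpha$-orbit rather than an arbitrary subset — and the stabiliser identity $(G_\alpha)_{\alpha^g}=G_\alpha\cap G_\alpha^g$ underpinning the passage to a coset action.
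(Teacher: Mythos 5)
Your proof is correct and is essentially the paper's argument in contrapositive form: both hinge on prepending the fixed point $\alpha$ to tuples over $\Lambda$, observing that $2$-subtuple completeness is preserved and that any element of $G$ carrying the extended tuple must lie in $G_\alpha$, and the ``in particular'' clause is handled identically via the stabiliser identity $(G_\alpha)_{\alpha^g}=G_\alpha\cap G_\alpha^g$. No issues.
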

\begin{proof}Assume that $G$ is binary. Let $\ell\in\mathbb{N}$ and let $I:=(\lambda_1,\lambda_2,\ldots,\lambda_\ell)$ and $J:=(\lambda_1',\lambda_2',\ldots,\lambda_\ell')$ be two tuples in $\Lambda^\ell$ that are $2$-subtuple complete for the action of $G_\alpha$ on $\Lambda$. Clearly, $I_0:=(\alpha,\lambda_1,\lambda_2,\ldots,\lambda_\ell)$ and $J_0:=(\alpha,\lambda_1',\lambda_2',\ldots,\lambda'_\ell)$ are $2$-subtuple complete for the action of $G$ on $\Omega$. As $G$ is binary, $I_0$ and $J_0$ are in the same $G$-orbit; hence $I$ and $J$ are in the same $G_\alpha$-orbit. From this we deduce that $G_\alpha^\Lambda$ is binary. 

Suppose now that $g\in G$ and that the action of $G_\alpha$ on the right cosets of $G_\alpha\cap G_\alpha^g$ in $G_\alpha$ is not binary. Set
$\beta:=\alpha^g$ and $\Lambda:=\beta^{G_\alpha}$. Now $\Lambda$ is a $G_\alpha$-orbit contained in $\Omega\setminus \{\alpha\}$ and the action of $G_\alpha$ on $\Lambda$ is permutation isomorphic to the action of $G_\alpha$ on the right cosets of $G_\alpha\cap G_\beta=G_\alpha\cap G_\alpha^g$ in $G_\alpha$. Therefore, $G_\alpha^{\Lambda}$ is not binary and hence $G$ is not binary. 
\end{proof}

\section{On computation}\label{s: preliminaries2}

In this section we explain how to make use of the lemmas given in the previous section in a computational setting. The computational problem we are faced with is as follows: given a transitive action of a group $G$ on a set $\Omega$, we wish to show that the action is non-binary; in some cases we will require more, namely that the action is strongly non-binary. If the set $\Omega$ is small enough, then we can often exhibit $G$ as a permutation group in the computer algebra package {\tt magma} and compute explicitly; when $\Omega$ gets too large, then this may be infeasible and we may know only the isomorphism type of $G$ and the isomorphism type of a point-stabilizer.

\subsection{Test 1: using Lemma~\ref{l: characters}.}
In some cases, Lemma~\ref{l: characters} is very efficient for dealing with some primitive actions of almost simple groups $G$ with socle a sporadic simple group. In particular, whenever the permutation character of $G$ is available in~\texttt{GAP}~\cite{GAP4} or in~\texttt{magma}~\cite{magma}, we can simply check directly the inequality in Lemma~\ref{l: characters}. For instance, using this method it is easy to verify that each faithful primitive action of $M_{11}$ is non-binary. 

For practical purposes, it is worth mentioning that apart from 
\begin{itemize}
\item the Monster,
\item the action of the Baby Monster on the cosets of a maximal subgroup of type $(2^2\times F_4(2)):2$,
\end{itemize} 
each permutation character of each primitive permutation representation of an almost simple group with socle a sporadic simple group is available in \texttt{GAP} via the package ``The GAP character Table Library". Therefore, for the proof of Theorem~\ref{t: sporadic}, we can quickly and easily use Lemma~\ref{l: characters} except for the Monster. To give a rough idea of the time to perform this test, in the Baby Monster (except for the action on the cosets on a maximal subgroup of type $(2^2\times F_4(2)):2$), it takes less than two minutes to perform this checking.  (The permutation character of the Baby Monster $G$ on the cosets of a maximal subgroup $M$ of type $(2^2\times F_4(2)):2$ is missing from the \texttt{GAP} library because the conjugacy fusion of some of the elements of $M$ in $G$ remains a mystery: this information is vital for computing the permutation character.)

For reasons that will be more clear later, for the proof of Theorem~\ref{t: sporadic}, we need to prove the non-binariness of permutation groups $G\le \Sym(\Omega)$ that are not necessarily almost simple, let alone having socle a sporadic simple group. When $|\Omega|$ is relatively small (for practical purposes, here relatively small means at most $10^9$), we can afford to compute the permutation character and check the inequality in Lemma~\ref{l: characters}.

\subsection{Test 2: using Lemma~\ref{l: fedup}.}
By connecting the notion of strong-non-binariness to 2-closure, Lemma~\ref{l: fedup} yields an immediate computational dividend: there are built-in routines in \texttt{GAP}~\cite{GAP4} and \texttt{magma}~\cite{magma} to compute the $2$-closure of a permutation group.

Thus if $\Omega$ is small enough, say $|\Omega|\le 10^6$, then we can easily check whether or not the group $G$ is $2$-closed. Thus we can ascertain whether or not $G$ is strongly non-binary.

\subsection{Test 3: a direct analysis}
The next test we discuss is feasible once again provided $|\Omega|\le 10^6$. It simply tests whether or not $2$-subtuple-completeness implies $3$-subtuple completeness, and the procedure is as follows:

We fix $\alpha\in \Omega$, we compute the orbits of $G_\alpha$ on $\Omega\setminus\{\alpha\}$ and we select a set of representatives $\mathcal{O}$ for these orbits. Then, for each $\beta\in \mathcal{O}$, we compute the orbits of $G_{\alpha}\cap G_{\beta}$ on $\Omega\setminus\{\alpha,\beta\}$ and we select a set of representatives $\mathcal{O}_\beta$. Then, for each $\gamma\in \mathcal{O}_\beta$, we compute $\gamma^{G_\alpha}\cap \gamma^{G_\beta}$. Finally, for each $\gamma'\in \gamma^{G_\alpha}\cap \gamma^{G_\beta}$, we test whether the two triples $(\alpha,\beta,\gamma)$ and $(\alpha,\beta,\gamma')$ are $G$-conjugate. If the answer is ``no'', then $G$ is not binary because by construction $(\alpha,\beta,\gamma)$ and $(\alpha,\beta,\gamma')$ are $2$-subtuple complete. In particular, in this circumstance, we can break all the ``for loops'' and deduce that $G$ is not binary. 

If the answer is ``yes'', for every $\beta,\gamma,\gamma'$, then we cannot deduce that $G$ is binary, but we can keep track of these cases for a deeper analysis. We observe that, if the answer is ``yes'', for every $\beta,\gamma,\gamma'$, then $2$-subtuple completeness implies $3$-subtuple completeness.

\subsection{Test 4: using Lemma~\ref{l: again0}.}
The next test is particularly useful in cases where $\Omega$ is very large, since its computational complexity is independent of $|\Omega|$. Let us suppose that $G$ and its subgroup $M$ are stored in a library as abstract groups (or as matrix groups or as permutation groups). When $|G:M|$ is too large, it is impractical (and sometimes impossible) to construct $G$ as a permutation group on the coset space $\Omega:=G/M$ with point stabilizer $M$. However, using Lemma~\ref{l: again0}, we can still prove that $G$ acting on $\Omega$ is non-binary: all we need is $g\in G$ such that the action of $M$ on $M\cap M^g$ is non-binary. Now, for carefully chosen $g$, $|M:M\cap M^g|$ might be much smaller than $|G:M|$ and we can use one of the previous tests to ascertain whether or not $M$ in its action on $M/(M\cap M^g)$ is binary. 

\subsection{Test 5: a new lemma}

Our final test requires an extra lemma which we include here, rather than in the earlier section, as its computational aspect is somehow inherent in its very statement.

\begin{lem}\label{l: alot}
Let $G$ be a primitive group on a set $\Omega$, let $\alpha$ be a point of $\Omega$, let $M$ be the stabilizer of $\alpha$ in $G$ and let $d$ be an integer with $d\ge 2$. Suppose $M\ne 1$ and, for each transitive action of $M$ on a set $\Lambda$ satisfying:
\begin{enumerate}
\item $|\Lambda|>1$, and 
\item every composition factor of $M$ is isomorphic to some section of $M^\Lambda$, and
\item either $M_{(\Lambda)}=1$ or, given $\lambda\in \Lambda$, the stabilizer $M_\lambda$ has a normal subgroup $N$ with $N\ne M_{(\Lambda)}$ and $N\cong M_{(\Lambda)}$, and
\item $M$ is binary in its action on $\Lambda$,
\end{enumerate}
we have that $d$ divides $|\Lambda|$. Then either $d$ divides $|\Omega|-1$ or $G$ is not binary.
\end{lem}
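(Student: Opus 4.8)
The plan is to argue by contraposition: suppose $G$ is binary, and deduce that $d$ divides $|\Omega|-1$. Since $M\ne 1$ and $G$ is primitive, $\Fix(M)$ is a block of the primitive action containing $\alpha$, hence equals $\{\alpha\}$; so every $M$-orbit other than $\{\alpha\}$ has size larger than $1$. Writing $\Omega\setminus\{\alpha\}=\Lambda_1\cup\cdots\cup\Lambda_k$ for the decomposition into $M$-orbits, we have $|\Omega|-1=\sum_{i=1}^k|\Lambda_i|$, so it suffices to prove that $d\mid|\Lambda_i|$ for each $i$. I will do this by checking that the transitive $M$-set $\Lambda:=\Lambda_i$ satisfies the four conditions (1)--(4) in the statement, so that the hypothesis of the lemma applies and yields $d\mid|\Lambda|$; summing over $i$ then finishes the proof.

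Conditions (1) and (4) are immediate. Condition (1) is exactly the size statement above. For condition (4): $\Lambda$ is a $G_\alpha$-orbit, so by Lemma~\ref{l: again0} the group $M^\Lambda=G_\alpha^\Lambda$ is binary in its action on $\Lambda$; since $M$ and $M^\Lambda$ induce the same permutations on $\Lambda$, they have identical orbits on $\ell$-tuples of $\Lambda$ for all $\ell$, so $M$ itself is binary on $\Lambda$.

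The substance is in conditions (2) and (3), which concern the kernel $K:=M_{(\Lambda)}$; if $K=1$ both hold trivially, so assume $K\ne 1$. Two facts, both consequences of the maximality of $M$, are the main levers. First, $\nor{G}{K}=M$: one has $M\le\nor{G}{K}$ since $K\trianglelefteq M$, and a proper containment would force $\nor{G}{K}=G$, hence $K\trianglelefteq G$ and so $K=1$ (primitivity makes the action faithful with core-free point stabilizer), a contradiction. Second, the set-wise stabilizer $G_\Lambda$ equals $M$ --- it contains $M$ and cannot be $G$ because $\alpha\notin\Lambda$ --- so $K=G_{(\Lambda)}=\bigcap_{\lambda\in\Lambda}G_\lambda$. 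Now fix $\lambda\in\Lambda$. Using these facts one exhibits inside $M_\lambda$ a normal subgroup $N$ with $N\cong K$ and $N\ne K$, obtained as a conjugate of $K$ (in the self-paired case one may take $N=K^x$ for $x\in G$ interchanging $\alpha$ and $\lambda$, and the inequality $N\ne K$ follows because $x\notin\nor{G}{K}=M$); this is condition (3). For condition (2) one must show that every composition factor of $M$ --- equivalently, every composition factor of $K$ --- is a section of $M^\Lambda=M/K$: the image in $M/K$ of a suitable conjugate $K'\le M$ of $K$ exhibits all composition factors of $K$ apart from those of $K\cap K'$, and an inductive descent --- shrinking $K\cap K'$ by intersecting with further conjugates, and using that every nontrivial orbital graph of a primitive group is connected, so that a finite such intersection is trivial --- takes care of the rest.

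Granting conditions (1)--(4) for every $M$-orbit on $\Omega\setminus\{\alpha\}$, the hypothesis gives $d\mid|\Lambda_i|$ for all $i$, whence $d\mid|\Omega|-1$, as required. I expect the main obstacle to be condition (2): it is a purely group-theoretic assertion about the point-wise stabilizers of suborbits of a primitive group, and keeping track of the composition factors trapped in the intersections $K\cap K'$ through the inductive descent is the delicate part; dealing with condition (3) when the relevant orbital is not self-paired --- where one must compare the associated ``suborbit kernels'' of an orbital and of its reverse --- also needs care.
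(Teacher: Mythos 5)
Your strategy coincides with the paper's: assume $G$ is binary, verify that every $G_\alpha$-orbit $\Lambda\subseteq\Omega\setminus\{\alpha\}$ satisfies hypotheses (1)--(4), conclude that $d$ divides $|\Lambda|$ for each such orbit, and sum over the orbits. Your treatment of (1) and (4) is exactly the paper's. For (2) you are reconstructing a known theorem: the assertion that every composition factor of a point stabilizer of a primitive group is a section of the group induced on any nontrivial suborbit is precisely \cite[Theorem~3.2C]{dixon_mortimer}, which the paper simply cites; your inductive descent through intersections of conjugates of $K$ is the right idea, but you need not (and, as written, do not fully) carry it out --- cite the theorem instead.

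The genuine gap is the one you flag yourself: condition (3) when the orbital $(\alpha,\lambda)^G$ is not self-paired. Your construction of $N$ requires an element $x$ interchanging $\alpha$ and $\lambda$, which exists only in the self-paired case. For a general $g$ with $\alpha^g=\lambda$, the conjugate $K^g=(G_\lambda)_{(\Lambda^g)}$ fixes pointwise the out-neighbourhood of $\lambda$, and that set contains $\alpha$ only when the orbital is self-paired; so $K^g$ need not lie inside $M_\lambda$, and the observation that $g\notin\nor{G}{K}=M$ (hence $K^g\ne K$), while still true, no longer delivers a second normal subgroup \emph{of} $M_\lambda$. The paper proceeds differently at this point: it takes $N:=(G_\lambda)_{(\Lambda^g)}$ for an arbitrary $g$ with $\alpha^g=\lambda$, notes that $K$ is normalized by $G_\alpha$ and $N$ by $G_\lambda$, so that both are normalized by $M_\lambda=G_\alpha\cap G_\lambda$, and then excludes $N=K$ by a connectedness argument: if the kernel on the out-neighbourhood were the same group at $\alpha$ as at $\lambda$, conjugating this identity around the orbital digraph (which is connected because $G$ is primitive) would force that common group to fix every point of $\Omega$, giving $K=1$ against the assumption $K\ne 1$. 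Thus in general the inequality $N\ne K$ comes from connectedness of the orbital graph rather than from $\nor{G}{K}=M$. Without importing that argument, or finding some other $M_\lambda$-invariant copy of $K$ distinct from $K$, your verification of condition (3) fails for every suborbit whose orbital is not self-paired, and the hypothesis of the lemma cannot be applied to those orbits.
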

\begin{proof}
Suppose that $G$ is binary. Since $\{\beta\in\Omega\mid \beta^m=\beta,\forall m\in M\}$ is a block of imprimitivity for $G$ and since $G$ is primitive, we obtain that either $M$ fixes each point of $\Omega$ or $\alpha$ is the only point fixed by $M$. The former possibility is excluded because $M\neq 1$ by hypothesis. Therefore $\alpha$ is the only point fixed by $M$. Let $\Lambda\subseteq\Omega\setminus\{\alpha\}$ be an $M$-orbit.  Thus $|\Lambda|>1$ and (1) holds. Since $G$ is a primitive group on $\Omega$, from~\cite[Theorem~3.2C]{dixon_mortimer}, we obtain that every composition factor of $M$ is isomorphic to some section of $M^\Lambda$ and hence (2) holds. From Lemma~\ref{l: again0}, the action of $M$ on $\Lambda$ is binary and hence (4) holds. Let now $\lambda\in\Lambda$ and consider the orbital graph $\Gamma:=(\alpha,\lambda)^G$. Observe that $\Gamma$ is connected because $G$ is primitive. Let $g\in G$ with $\alpha^g=\lambda$. Clearly, $\Lambda$ is the set of out-neighbors of $\alpha$ in $\Gamma$ and $\Lambda':=\Lambda^g$ is the set of out-neighbors of $\alpha^g=\lambda$ in $\Gamma$. Set $N:=(G_\lambda)_{(\Lambda')}$. Clearly, $(G_{\alpha})_{(\Lambda)}=M_{(\Lambda)}$ and $(G_{\alpha^g})_{(\Lambda^g)}=(G_\lambda)_{(\Lambda')}=N$ are isomorphic because they are $G$-conjugate via the element $g$. Moreover, $M_{(\Lambda)}=(G_\alpha)_{(\lambda^{G_\alpha})}$ is normalized by $G_\alpha$ and, similarly, $N$ is normalized by $G_\lambda$: therefore they are both normalized by $G_\alpha\cap G_\lambda=M\cap G_\lambda=M_\lambda$. If $M_{(\Lambda)}$ and $N$ are equal, an easy connectedness argument yields that $M_{(\Lambda)}=1$. Therefore (3) also holds.

Since the four hypotheses in the statement of this lemma hold for the action of $M=G_\alpha$ on its $G_\alpha$-orbit $\Lambda$, we get $d$ divides $|\Lambda|$. Since this argument does not depend on the $G_\alpha$-orbit $\Lambda\subseteq\Omega\setminus\{\alpha\}$, we obtain that $\Omega\setminus\{\alpha\}$ has cardinality divisible by $d$. Thus $|\Omega|-1$ is divisible by $d$.  
\end{proof}

When it comes to implementing Lemma~\ref{l: alot} on a computer, it is important to observe that we do {\bf not} need to construct the embedding of $M=G_\alpha$ in $G$; indeed we do not need the group $G$ stored in our computer at all. Instead we need only the index $|G:M|=|\Omega|$ and the abstract group $M$ (given as a group of matrices, or as a permutation group, or as a finitely presented group). 

Given $|\Omega|$ and $M$, we may choose a prime $p$ (typically $p=2$) with $p$ not dividing $|\Omega|-1$ and we construct all the transitive permutation representations of degree greater than $1$ and relatively prime to $p$ of $M$ satisfying (1),~(2) and~(3). If none of these permutation representations is binary (and we can use any of Test 1 to 4 to test this), we infer that every transitive permutation representation of $M$ of degree greater than $1$ satisfying (1),~(2),~(3) and~(4) has degree divisible by $p$. Now, from Lemma~\ref{l: alot}, we get that $G$ in its action on the set $M$ of right cosets of $M$ in $G$ is not binary because $p$ does not divide $|\Omega|-1$.

We give an explicit example to show how easily Lemma~\ref{l: alot} can be applied. The monster $G$ has  a maximal subgroup $M$ isomorphic to $\mathrm{PGL}_2(19)$. The index of $M$ in $G$ is 
$$118131202455338139749482442245864145761075200000000\sim 10^{50}$$
and we can easily observe that this number is even. After implementing Lemma~\ref{l: alot} in a computer, it takes the blink of an eye to prove that each permutation representation of $M$ of degree at least $1$ and odd is non-binary. Thus $G$ acting on the cosets of $M$ is non-binary. Observe that besides $|G:M|$ and the isomorphism class of $M$, no information about $G$ is needed.

\section{The non-monster groups}\label{s: nonmonster}

The centre-piece of this section is Table~\ref{t: table1}; it summarises the results of applying the tests described in the previous section to all almost simple groups with a sporadic socle, barring the monster.

Table~\ref{t: table1} consists of two columns: the first column lists all of the almost simple groups $G$ with socle a sporadic simple group (recall that we include Tits group $^{2}F_4(2)'$ in the list of sporadic groups). In the second column, we list all pairs $(M,\circ)$, where $M$ is a maximal subgroup of $G$ with the property that the action of $G$ on the set $G/M$ of right cosets of $M$ in $G$ satisfies Lemma~\ref{l: characters} (in other words, the action is not excluded by Test 1, and hence is a  potentially binary action). We use the ATLAS~\cite{ATLAS} notation for the group $M$. 

Now the symbol $\circ$ is either $\infty$ or a prime $p$ or ``?''.  We write $\circ=\infty$ if we have proved the non-binariness of $G$ in its action on $G/M$ using Tests 2 or 3; we write $\circ=p$ if we have proved the non-binariness of $G$ in its action on $G/M$ using Test 5 applied to the prime $p$; and we write $\circ=?$ if both methods have failed. The symbol ``$-$" in the second column means that each primitive permutation representation of $G$ is not binary via Lemma~\ref{l: characters} (Test~1).


\begin{table}[!ht]
\begin{tabular}{|c|c|}\hline
Group &  Outcome of tests \\\hline
$M_{11}$&$-$\\
$M_{12}$&$-$\\
$M_{12}.2$&$-$\\
$M_{22}$&$-$\\
$M_{22}.2$&$-$\\
$M_{23}$&$-$\\
$M_{24}$&$(L_2(7),\infty)$\\
$J_{1}$&$(D_6\times D_{10},\infty), (7:6,\infty)$\\
$J_{2}$&$(A_5,\infty)$\\
$J_{2}.2$&$(S_5,\infty)$\\
$J_{3}$&$-$\\
$J_{3}.2$&$(19:18,3)$\\
$J_4$&$(M_{22}:2,2)$, $(11_+^{1+2}:(5\times 2S_4), 2)$,  $(L_2(32):5,11)$, $(L_2(23):2,2)$\\
& $(U_3(3),2)$, $(29:28,2)$, $(43:14,7)$, $(37:12,2)$\\
$^{2}F_4(2)'$&$-$\\
$^{2}F_4(2)$&$(M,2)$ where $M$ has order $156$\\
$Suz$&$(A_7,2)$, $(L_2(25),2)$\\
$Suz.2$&$(S_7,7)$\\
$McL$&$-$\\
$McL.2$&$-$\\
$HS$&$(M_{22},2)$\\
$HS.2$&$(M_{22}:2,2)$\\
$Co_3$&$(A_4\times S_5,?)$\\
$Co_2$&$(5_+^{1+2}:4S_4,2)$\\
$Co_1$&$(A_9\times S_3,3)$, $((A_7\times L_2(7)):2,2)$, $((D_{10}\times (A_5\times A_5).2).2,2)$\\
& $(5_+^{1+2}:\mathrm{GL}_2(5),2)$, $(5^3:(4\times A_5).2,2)$, $(5^2:4A_4,2)$, $(7^2:(3\times 2A_4),2)$\\
$He$&$(5^2:4A_4,2)$\\
$He.2$&$-$\\
$Fi_{22}$&$-$\\
$Fi_{22}.2$&$-$\\
$Fi_{23}$&$(L_2(23),2)$\\
$Fi_{24}'$&$((A_5\times A_9):2,3)$, $(A_6\times L_2(8):3,2)$, $(7:6\times A_7,7)$, $(U_3(3).2,2)$\\ &$(U_3(3).2,2)$, $(L_2(13).2,2)$, 
$(L_2(13).2,2)$, $(29:14,7)$\\
$Fi_{24}$&$(S_5\times S_9,3)$, $(S_6\times L_2(8):3,2)$, $(7:6\times S_7,7)$, $(7^{1+2}_+:(6\times S_3).2,2)$, $(29:28,7)$\\
$Ru$&$(L_2(13):2,2)$, $(5:4\times A_5,?)$, $(A_6.2^2,2)$, $(5_+^{1+2}:[2^5],2)$, $(3.A_6.2^2,2)$\\
$O'N$&$(A_7,2)$, $(A_7,2)$\\
$O'N.2$&$(31:30,5)$, $(L_2(7):2,2)$, $(A_6:2_2,2)$\\ 
$Ly$&$(67:22,11)$, $(37:18,3)$\\
$Th$&$(3^5:2S_6,2)$, $(5_{+}^{1+2}:4S_4,2)$, $(5^2:\mathrm{GL}_2(5),2)$,  $(7^2:(3\times 2S_4),2)$\\
& $(L_2(19).2,2)$, $(L_3(3),2)$, $(M_{10}=A_6.2_3,2)$, $(31:15,4)$, $(S_5,5)$\\
$HN$&$(3_+^{1+4}:4A_5,2)$\\
$HN.2$&$-$\\
$B$&$((2^2\times F_4(2)):2,?)$, $(3^{1+8}.2^{1+6}.U_4(2).2,?)$, 
$((3^2:D_8\times U_4(3).2^2).2,?)$, 
$(5:4\times HS:2,2)$ \\
&$(3^2.3^3.3^6.(S_4\times 2S_4),?)$, $(S_4\times {^2}F_4(2),2)$,  $(S_5\times (M_{22}:2),2)$, $((S_6\times (L_3(4):2)).2,2)$, 
\\&
$(5^3:L_3(5),2)$, $(5^{1+4}.2^{1+4}.A_5.4,2)$, $((S_6\times S_6).4,2)$,
 $((5^2:4S_4)\times S_5,2)$\\
& $(L_2(49).2,2)$, $(L_2(31),2)$, $(M_{11},2)$, $(L_3(3),2)$, $(L_2(17):2,2)$, $(L_2(11):2,2)$, $(47:23,23)$\\
\hline
\end{tabular}
\caption{Disposing of some of the sporadic simple groups.}\label{t: table1}
\end{table}

We have made use of the fact that full information on the maximal subgroups for each group in the first column of Table~$1$ is available: these are all stored in \texttt{GAP} or in~\texttt{magma}. To be more precise, in each case, either the maximal subgroup $M$ is stored providing a generating set (written as words in the standard generators for $G$), or when such information is not available (for instance, already for some of the maximal subgroups of $Fi_{23}$), the group $M$ is explicitly described (for instance, as a $p$-local subgroup) and hence also in this case it is possible to construct $M$ with a computer.

We are now able to prove Theorem~\ref{t: sporadic} for all groups bar the monster.

\begin{prop}\label{p: 1}
Let $G$ be an almost simple primitive group with socle a sporadic simple group. If $G$ is binary, then $G$ is the Monster group.
\end{prop}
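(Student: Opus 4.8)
The proof of Proposition~\ref{p: 1} is essentially a bookkeeping exercise: we must verify that \emph{every} faithful primitive action of \emph{every} almost simple group $G$ with sporadic socle (other than the monster) is non-binary, and Table~\ref{t: table1} records exactly which actions require work beyond Test~1. My plan is therefore to walk through the table row by row. First I would invoke Lemma~\ref{l: characters} (Test~1) to dispose at a stroke of every action not listed in column two: for those actions the permutation character is available in the \texttt{GAP} character table library (this is the case for all sporadic almost simple groups except the monster, by the discussion in \S\ref{s: preliminaries2}, the only genuine exception being the Baby Monster action on cosets of $(2^2\times F_4(2)){:}2$, which does appear in the table with a ``$?$''), and the inequality $r_\ell(G)\le r_2(G)^{\ell(\ell-1)/2}$ fails for some small $\ell$. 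This reduces the problem to the finitely many pairs $(M,\circ)$ displayed in Table~\ref{t: table1}.

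Next I would treat the remaining pairs according to the symbol $\circ$. For the entries with $\circ=\infty$ (e.g. $M_{24}$ with $L_2(7)$, $J_1$, $J_2$, $J_2.2$, $J_3.2$'s companion cases, $Suz$, $HS$, and so on) one exhibits $G$ as a permutation group on $G/M$ in \texttt{magma} — feasible since in each of these cases $|G:M|\le 10^6$ — and applies either Test~2 (compute the $2$-closure and invoke Lemma~\ref{l: fedup}, so that the action is strongly non-binary, hence non-binary by Lemma~\ref{l: again12}) or Test~3 (check directly that $2$-subtuple-completeness fails to imply $3$-subtuple-completeness). For the entries with $\circ=p$ a prime, one applies Test~5, i.e. Lemma~\ref{l: alot}: choosing the prime $p$ so that $p\nmid |\Omega|-1$, one enumerates all transitive permutation representations of $M$ of degree greater than $1$ and coprime to $p$ satisfying conditions~(1)--(3) of that lemma, checks (using Tests~1--4, which is cheap because the relevant degrees are small) that none of them is binary, and concludes that $G$ acting on $G/M$ is non-binary. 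This step is attractive precisely because its cost is independent of $|\Omega|$, which matters for the larger groups such as $J_4$, $Co_1$, $Fi_{24}$, $Th$ and $B$.

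That leaves the handful of pairs carrying a ``$?$'': in Table~\ref{t: table1} these are $Co_3$ with $A_4\times S_5$, $Ru$ with $5{:}4\times A_5$, and several maximal subgroups of the Baby Monster $B$, namely $(2^2\times F_4(2)){:}2$, $3^{1+8}.2^{1+6}.U_4(2).2$, $(3^2{:}D_8\times U_4(3).2^2).2$, and $3^2.3^3.3^6.(S_4\times 2S_4)$. For these I would argue by hand. In each case $|G:M|$ is small enough ($Co_3$ and $Ru$) that a direct \texttt{magma} computation with the full coset action settles the matter, or — for the four Baby Monster cases — one uses Lemma~\ref{l: again0} (Test~4): pick $g\in G$ for which $|M:M\cap M^g|$ is manageable, identify the action of $M$ on $M/(M\cap M^g)$, and show this smaller action is non-binary by one of Tests~1--3 or directly by exhibiting a non-binary witness via Example~\ref{ex: snba2} (or via Lemma~\ref{l: M2} or Lemma~\ref{l: added} when a suitable $p$-element configuration is visible). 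Combining all of this: every faithful primitive action of every almost simple group with sporadic socle other than the monster is non-binary, which is the assertion of the proposition.

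The main obstacle is the Baby Monster, and specifically the four ``$?$'' actions: here the index $|G:M|$ is far too large to build the coset action directly, the permutation character is unavailable (at least for $(2^2\times F_4(2)){:}2$), and one must work inside $M$ via Lemma~\ref{l: again0}, which requires finding a good element $g$ and then understanding a subgroup structure of $M$ that is itself intricate ($p$-local subgroups, extensions of $U_4(2)$ or $U_4(3)$, etc.). Verifying that the resulting subactions of $M$ are non-binary — and confirming there is no sneaky binary action hiding among them — is the delicate part of the argument; the rest of the table is routine once the computational tests of \S\ref{s: preliminaries2} are in place.
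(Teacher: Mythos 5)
Your overall architecture matches the paper's: Test~1 clears everything not in Table~\ref{t: table1}, the $\infty$ and prime entries are handled by Tests~2/3 and Test~5 respectively, and the real content of the proposition is the six ``?'' cases. But your treatment of those six cases has a concrete gap. For $Co_3$ on cosets of $A_4\times S_5$ and $Ru$ on cosets of $5{:}4\times A_5$ the degrees are $|Co_3|/1440\approx 3.4\times 10^8$ and $|Ru|/1200\approx 1.2\times 10^8$, far beyond the $10^6$ threshold under which Tests~2 and~3 (2-closure, or orbits of two-point stabilizers on all of $\Omega$) are feasible; a ``direct \texttt{magma} computation with the full coset action'' is exactly what one cannot do here. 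The paper instead applies Test~4: it finds $g\in\nor{G}{P}$ with $M\cap M^g=P$ a Sylow $2$-subgroup of $M$, so that Lemma~\ref{l: again0} reduces the problem to the action of $M$ on the $45$ (resp.\ $75$) cosets of $P$, which is trivially checked to be non-binary.

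For the four Baby Monster cases your plan leans on Lemma~\ref{l: again0} with a well-chosen $g$, but the paper uses that lemma for none of them --- computing $M\cap M^g$ inside $B$ is itself a serious obstacle. What the paper actually does: for $(3^2{:}D_8\times U_4(3).2^2).2$ it applies Lemma~\ref{l: M2} at $p=7$; for $3^{1+8}.2^{1+6}.U_4(2).2$ and $(2^2\times F_4(2)){:}2$ it applies Lemma~\ref{l: added}, which requires pinning down the fusion of $5$-elements of $M$ in $B$ (via centralizer orders, since the permutation character is unavailable in the $F_4(2)$ case) and computing $|\Fix_\Omega(g)|$ and $|\Fix_\Omega(V)|$ from \eqref{e: fora} together with Wilson's determination of $\nor{G}{V}$; and for $3^2.3^3.3^6.(S_4\times 2S_4)=\nor{G}{V}$ it locates a maximal subgroup $K\cong L_3(3)$ containing $V$, so that the $K$-orbit $\Lambda=\alpha^K$ carries the action of $L_3(3)$ on cosets of $\nor{K}{V}$, which is not $2$-closed, whence $G^\Lambda$ is strongly non-binary and Lemma~\ref{l: again12} applies. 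You do name Lemmas~\ref{l: M2} and~\ref{l: added} as candidate tools, which is the right instinct for three of the four cases, but the fusion and fixed-point computations and the $L_3(3)$ argument are the actual substance of the proof and are missing from your sketch.
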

\begin{proof}
In view of Table~\ref{t: table1}, it suffices to consider the case that $G$ is either $Co_3$, or $Ru$, or $B$: these are the only groups having a ``?'' symbol in one of their rows. We first assume that $G$ is either $Co_3$ or $Ru$; here, in view of Table~\ref{t: table1} the group $G$ is  acting on the cosets of $M=A_4\times S_5$ when $G=Co_3$, or $M=5:4\times A_5$ when $G=Ru$. Given a Sylow $2$-subgroup $P$ of $M$, in both cases it is easy to verify with \texttt{magma} that there exists $g\in \nor G P$ with $M\cap M^g=P$. When $G=Co_3$, $P$ is of type $2\times 2\times D_4$ and, when $G=Ru$, $P$ is of type $4\times 2\times 2$. Another computation shows that the actions of $A_4\times S_5$ on the cosets of $2\times 2\times D_4$, and of $5:4\times A_4$ on the cosets of $4\times 2\times 2$ are not binary. Therefore, $G$ in its action on the cosets of $M$ is not binary by Lemma~\ref{l: again0}.

Finally assume that $G$ is the Baby Monster $B$. In view of Table~\ref{t: table1}, $G$ is acting on the cosets of $M$ where $M$ is of one of the following types $$
(2^2\times F_4(2)):2, \hspace{1cm}
3^{1+8}.2^{1+6}.U_4(2).2, \hspace{1cm} 
(3^2:D_8\times U_4(3).2^2).2, \hspace{1cm}
3^2.3^3.3^6.(S_4\times 2S_4).$$ 
Let $\Omega$ be the set of right cosets of $M$ in $G$ and let $\alpha\in \Omega$ with $G_\alpha=M$ (that is, $\alpha$ is the coset $M$). We go through the four remaining cases one at a time.

{\bf  that $M\cong (3^2:D_8\times U_4(3).2^2).2$.} Observe that a Sylow $7$-subgroup of $G$ has order $7^2=49$, that $G$ has a unique conjugacy class of elements of order $7$, that $|M|$ and $|G:M|$ are both divisible by $7$. Then Lemma~\ref{l: M2} implies that $G$ is not binary.

\smallskip 

{\bf Suppose that $M\cong 3^{1+8}.2^{1+6}.U_4(2).2$.} The group $G$ has two conjugacy classes of elements of order $5$, with the ATLAS notation, of type 5A and of type 5B. By  computing  the  permutation  character  of $G$ via  the  package ``The GAP character Table Library" in GAP, we see that an element of type 5A fixes no point and that an element of type 5B fixes $25000$ points. Observe that $|M|$ is divisible by $5$, but not by $5^2=25$. Moreover, using the ATLAS~\cite{ATLAS}, we see that $G$ contains an elementary abelian $5$-group $V$ of order $5^3$ generated by three elements of type 5B; moreover, the normalizer of $V$ is a maximal subgroup of $G$ of type $5^3:L_3(5)$. In particular, each non-identity $5$-element of $V$ is of type 5B, because $L_3(5)$ acts transitively on the non-zero vectors of $5^3$. Since $|M|$ is not divisible by $25$, we conclude that $\Fix(V)$ is empty. Now we apply Lemma~\ref{l: added} to $L$, a subgroup of $M$ of order $25$ such that $|\Fix(L)|<|\Fix(g)|$. We conclude that $G$ is not binary.



\smallskip

{\bf Suppose that $M\cong 3^{2}.3^3.3^6.(S_4\times 2S_4)$.} From the ATLAS~\cite{ATLAS}, we see that $M=\nor G V$, where $V$ is an elementary abelian $3$-subgroup of order $3^2$ with $V\setminus\{1\}$  consisting only of elements of type 3B. For the proof of this case, we refer to~\cite{MR892191} and~\cite{MR1656568}. According to Wilson~\cite[Section~$3$]{MR892191}, there exist three $G$-conjugacy classes of elementary abelian $3$-subgroups of $G$ of order $3^2$ consisting only of elements of type 3B, denoted in~\cite{MR892191} as having type~(a), or~(b), or~(c). Moreover, from~\cite[Proposition~$3.1$]{MR892191}, we see that only for the elementary abelian $3$-groups of type~(a) the normalizers are maximal subgroups of $G$ and of shape  $3^{2}.3^3.3^6.(S_4\times 2S_4)$. Thus $V$ is of type~(a). Let $V_1,V_2,V_3$ be representatives for the conjugacy classes of elementary $3$-subgroups of $G$ of order $3^2$ and consisting only of elements of type 3B. We may assume that $V_1=V$. From~\cite{MR892191} or~\cite{MR1656568}, for $W\in \{V_1,V_2,V_3\}$, $\nor G W$ has shape $3^2.3^3.3^6.(S_4\times 2S_4)$, $(3^2\times 3^{1+4}).(2^2\times 2A_4).2$, and $(3^2\times 3^{1+4}).(2\times 2S_4)$; in \cite{MR892191,MR1656568}, these cases are refered to as type (a), type (b) and type (c), respectively.

Next, we consider a maximal subgroup $K$ of $G$ isomorphic to $\mathrm{PSL}_3(3)$. From \cite{MR1656568} (pages 9 and 10 and the discussion therein on the interaction between $K$ and the types~(a),~(b) and~(c)), we infer that $K$ contains a conjugate of $V$. In particular, replacing $K$ by a suitable $G$-conjugate, we may assume that $V\le K$. In particular, $M\cap K=\nor K V$. Take $\Lambda:=\alpha^K$ and observe that $\Lambda$ is  a $K$-orbit on $\Omega$ and that the stabilizer of the point $\alpha$ in $K$ is $\nor K V$. Moreover, since $K$ is maximal in $G$, we get $G_{\Lambda}=K$. We claim that $G^\Lambda=K^\Lambda$ is strongly non-binary, from which it follows that $G$ is not binary by~\ref{l: again12}. Observe that the action of  $K$ on $\Lambda$ is permutation isomorphic to the action of $K$ on the set of right cosets of $\nor K V$ in $K$.


Now, we consider the abstract group $K_0=\mathrm{PSL}_3(3)$, we consider an elementary abelian $3$-subgroup $V_0$ of order $9$ of $K_0$, we compute $N_0:=\nor {K_0}{V_0}$ and we consider the action of $K_0$ on the set $\Lambda_0$ of right cosets of $N_0$ in $K_0$. A straightforward computation shows that $K_0$ is not $2$-closed in this action, and hence $K_0$ in its action on $\Lambda_0$ is strongly non-binary by Lemma~\ref{l: fedup}.

\smallskip 

{\bf Suppose that $M\cong (2^2\times F_4(2)):2$.} Here we cannot invoke ``The GAP character Table Library" to understand whether $F_4(2)$ contains elements of type 5A or 5B, because the fusion of $M$ in $G$ is (in some cases) still unknown. As we mentioned above, $G$ has two conjugacy classes of elements of order $5$, denoted by 5A and 5B; what is more the group $F_4(2)$ contains a unique conjugacy class of elements of order $5$.  Observe that the centralizer in $G$ of an element of type 5A (respectively 5B) has order $44352000$ (respectively $6000000$). Now, the centralizer in $F_4(2)$ of a element of order $5$ has cardinality $3600$. Since $3600$ does not divide $6000000$, we get that $M$ contains only elements of type 5A; in particular elements of type 5B do not fix any element of $\Omega$.

Using \eqref{e: fora}, we conclude that if $g$ is an element of order $5$ in $M$, then
$$|\Fix_\Omega(g)|=\frac{|G|}{|M|}\frac{|M:\cent M g|}{|G:\cent G g|}=\frac{|\cent G g|}{|\cent M g|}=\frac{44352000}{3600\times 4 \times 2}=1540.$$

Now let $V$ be a Sylow $5$-subgroup of $M$ and observe that $V$ has order $5^2$ and $V\setminus\{1\}$ consists only of elements of type 5A. Referring to~\cite[Section~6]{MR892191}, we see that $G$ contains only one conjugacy class of elementary-abelian groups of order $25$ for which the non-trivial elements are all of type 5A. Thus $V$ is a representative of this $G$-conjugacy class. Now, Theorem~$6.4$ in~\cite{MR892191} yields $N_G(V)\cong 5^2:4S_4 \times S_5$. Appealing to \eqref{e: fora} again, we conclude that 
$$|\Fix_\Omega(V)|=\frac{|G|}{|M|}\frac{|M:\nor M V|}{|G:\nor G V|}=\frac{|\nor G V|}{|\nor M V|}=\frac{28800}{19200}=15.$$
Now Lemma~\ref{l: added} implies that $G$ is not binary.
\end{proof}

\section{The Monster}\label{s: monster}

In this section we prove Theorem~\ref{t: sporadic} for the monster. Our proof will break down into several parts, and to ensure that we cover all possibilities we will make use of a recent account of the classification of the maximal subgroups of the sporadic simple groups in~\cite{wilsonArXiv}.

From~\cite[Section~3.6]{wilsonArXiv}, we see that the classification of the maximal subgroups of the Monster $G$ is complete except for a few small open cases. In particular, if $M$ is a maximal subgroup of $G$, then either
\begin{description}
\item[(a)] $M$ is in~\cite[Section~$4$]{wilsonArXiv}, or
\item[(b)] $M$ is almost simple with socle isomorphic to $L_2(8)$, $L_2(13)$, $L_2(16)$, $U_3(4)$ or $U_3(8).$
\end{description} 

From here on $G$ will always denote the monster group, and $M$ will be a maximal subgroup of $G$. We consider the action of $G$ on cosets of $M$.

\subsection{The almost simple subgroups in {\bf (b)}}
We begin by applying Test 5 to those groups in category {\bf (b)}. Provided that such a group $M$ is not isomorphic to $L_2(16).2$, we find that, by applying Test 5 with the prime $2$ or $3$, we can immediately show that $G$ in its action on $G/M$ is not binary. 

The group $M=L_2(16).2$ is exceptional here: for each prime $p$ dividing $|M|$, there exists a permutation representation of $M$ of degree coprime to $p$ satisfying the four conditions in Lemma~\ref{l: alot}; hence we cannot apply Test~5. We defer the treatment of $L_2(16).2$ to \S\ref{s: snbs} below. 

From here on we will consider those groups in category {\bf (a)}, as well as the deferred group $L_2(16).2$.

\subsection{Constructing a strongly non-binary subset}\label{s: snbs}


Our next step is to apply Lemma~\ref{l: M2} to the remaining group, $L_2(16).2$, from category {\bf(b)} and to the groups from category {\bf (a)}. We start with a technical lemma; this is then followed by the statement that we need, Lemma~\ref{l: M3}.

\begin{lem}\label{l: M1}Let $G$ be the Monster, let $p\in \{5,7,11\}$ and let $x\in G$ with $o(x)=p$. Then there exists $g\in G$ with $\langle x,x^g\rangle$ elementary abelian of order $p^2$ and with $xx^g$ conjugate to $x$ via an element of $G$.
\end{lem}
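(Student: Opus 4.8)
The plan is to establish, for each $p\in\{5,7,11\}$, the existence of a suitable elementary abelian subgroup of order $p^2$ inside the Monster $G$ by exhibiting a concrete maximal (or other well-understood) subgroup $K$ of $G$ in which the required configuration already lives, and then checking that the relevant elements remain $G$-conjugate. Concretely, for $x$ of order $p$ we want $g\in G$ with $\langle x, x^g\rangle$ elementary abelian of rank $2$ and $xx^g$ $G$-conjugate to $x$; since $G$ has a single conjugacy class of elements of order $5$ and of order $11$, and exactly two classes (7A, 7B) of elements of order $7$, the condition ``$xx^g$ conjugate to $x$'' is a condition on which class the product lands in. So the strategy is: (i) locate a subgroup $K\le G$ containing an elementary abelian $p$-group $V$ of rank at least $2$; (ii) verify that all nontrivial elements of $V$ are $G$-conjugate to each other (this is automatic for $p=5,11$ and must be checked for $p=7$); (iii) pick $x$ a generator of one cyclic subgroup of $V$ and $x^g$ a generator of another, with $xx^g$ a third nontrivial element of $V$, so that it lies in the same $G$-class as $x$; (iv) note that since $x$ and $x^g$ are $G$-conjugate, there indeed exists $g\in G$ realising the conjugation.

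The key steps, in order, would be: first, invoke the known $p$-local structure of the Monster from the ATLAS and from Wilson's work (e.g.\ \cite{ATLAS}, \cite{MR892191}) to find, for $p=5$, the maximal subgroup of shape $5^3{:}L_3(5)$ or $5^{1+6}{:}2J_2{:}4$ containing elementary abelian $5^3$; for $p=7$, a subgroup of shape $7^{1+4}{:}(3\times 2S_7)$ or $7^{2+1+2}{:}\mathrm{GL}_2(7)$ or the maximal $7^2{:}\mathrm{SL}_2(7)$-type subgroups, containing an elementary abelian $7^2$; and for $p=11$, the maximal subgroup $11^{1+2}{:}(5\times 2S_4)$ (which is maximal in the Monster by the classification), containing an elementary abelian subgroup of order $11^2$ inside the extraspecial group's structure, or else $(11{:}5 \times \ldots)$-type subgroups. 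Second, for each such $V$, use the action of the complement (a linear group acting on $V$ as a module) to see that the nontrivial vectors of $V$ form a single orbit when the complement acts transitively on $V\setminus\{0\}$ — this guarantees, via fusion, that all nontrivial elements of $V$ are $G$-conjugate and in particular $x$, $x^g$, $xx^g$ all lie in one class. Third, for $p=7$, where there are two classes, be careful to pick $V$ all of whose nontrivial elements are of the same type (say 7B, the type appearing in the relevant $7$-local maximal subgroup), using the character-table-library data in {\tt GAP} to identify the class; alternatively pick $V$ of rank $\geq 2$ on which the normaliser acts transitively on lines, forcing all elements to be conjugate. Fourth, conclude: choosing $x$ and $x^g$ to be generators of two distinct lines of $V$ with product generating a third line, the hypotheses are met.

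The main obstacle I expect is the case $p=7$, precisely because of the two classes 7A and 7B: one must ensure the chosen $V$ is ``homogeneous'' in the sense that every nonidentity element is in the same $G$-class, and that this common class is one for which $x$, $x^g$ and $xx^g$ can all be taken equal (as a class), i.e.\ the product of two conjugate order-$7$ elements in $V$ is again in that class. This requires knowing the fusion of $7$-elements precisely; for this one leans on the fact that if the normaliser of $V$ induces a group acting transitively on the $p+1=8$ lines of $V$ (e.g.\ $\mathrm{SL}_2(7)$ or a subgroup thereof acting naturally), then all nontrivial elements of $V$ are automatically $G$-conjugate, which settles homogeneity and the product condition simultaneously. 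The secondary difficulty is simply bookkeeping: confirming that the claimed $p$-local subgroups exist in $G$ with the stated structure, which is a matter of citing \cite{ATLAS} and \cite{MR892191} correctly rather than genuine mathematics. A clean uniform way to finish all three primes is: in each case exhibit a subgroup of $G$ isomorphic to a group of the form $p^2{:}Q$ with $Q\le\mathrm{GL}_2(p)$ containing $\mathrm{SL}_2(p)$ (or at least an element of order $p+1$ acting irreducibly plus enough of $\mathrm{GL}_2(p)$ to be line-transitive), so that $Q$ is transitive on the nonzero vectors of $p^2$; then every nonidentity element of the normal $p^2$ is $G$-conjugate to $x$, and taking $x, x^g, xx^g$ inside this $p^2$ completes the proof.
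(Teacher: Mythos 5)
Your overall strategy --- exhibit an elementary abelian subgroup $V$ of order $p^2$, all of whose non-identity elements lie in one $G$-class, with $\nor GV$ transitive on $V\setminus\{1\}$ --- is exactly the route the paper takes (reading the data off page 234 of the ATLAS, or off Wilson's odd-local paper). But there are two genuine problems. First, a factual error: the Monster has \emph{two} conjugacy classes of elements of order $5$ (5A and 5B), just as for order $7$; only for $p=11$ is the class unique. So the $p=5$ case is not ``automatic'' and needs exactly the same care as $p=7$. Second, and more structurally, the lemma quantifies over \emph{every} $x$ with $o(x)=p$. For $p\in\{5,7\}$ you must therefore produce \emph{two} homogeneous subgroups, one whose non-identity elements are all of type pA and one all of type pB (the paper's $V$ and $V'$); your plan to ``pick $V$ all of whose nontrivial elements are of the same type (say 7B)'' proves the statement only for $x$ in that one class and leaves the other class of order-$p$ elements untouched. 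As written, the argument does not establish the lemma for a 7A-element (nor for either class of order-$5$ elements, given the first error).

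Some of the supporting subgroup citations are also off: $5^3{:}L_3(5)$ is a maximal subgroup of the Baby Monster, not the Monster, and $11^{1+2}{:}(5\times 2S_4)$ lives in $J_4$ --- indeed $11^3$ does not divide $|G|$, the Sylow $11$-subgroup of the Monster being elementary abelian of order $11^2$ (which is precisely why $p=11$ is immediate: there is a single class 11A, and any $x^g\in P\setminus\langle x\rangle$ works). Finally, a small logical point: transitivity of $\nor GV$ on the $p+1$ \emph{lines} of $V$ only shows that $xx^g$ is conjugate to some power $x^i$; to land in the class of $x$ itself you need transitivity on the non-zero \emph{vectors} (as the paper asserts), or else an appeal to the rationality of the relevant classes, which you should make explicit if you take the line-transitive variant.
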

\begin{proof}
When $p=11$, there is nothing to prove: $G$ has a unique conjugacy class of elements of order $11$ and a Sylow $11$-subgroup of $G$ is elementary abelian of order $11^2$.

When $p\in \{5,7\}$, it is enough to read~\cite[page 234]{ATLAS}: $G$ contains two conjugacy classes of elements of order $p$. Moreover, $G$ contains two elementary abelian $p$-subgroups $V$ and $V'$ both of order $p^2$, with $V$ generated by two elements of type pA and with $V'$ generated by two elements of type pB. Moreover, $\nor G V$ and $\nor G {V'}$ act transitively on the non-identity elements of $V$ and of $V'$, respectively.

This lemma can also be easily deduced from~\cite{wilsonoddlocal}.
\end{proof}

\begin{lem}\label{l: M3}
Let $G$ be the Monster and let $M$ be a maximal subgroup of $G$. If $M$ is as in the first column of Table~$\ref{t: table2'}$, then the action of $G$ on the right cosets of $M$ in $G$ is not binary.
\end{lem}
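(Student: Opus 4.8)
The plan is to deduce Lemma~\ref{l: M3} as a uniform consequence of Lemma~\ref{l: M2} and Lemma~\ref{l: M1}, going through Table~\ref{t: table2'} row by row. Write $\Omega$ for the set of right cosets of $M$ in $G$ and let $\alpha\in\Omega$ be the coset $M$, so that $G_\alpha=M$. I would begin by recording the only arithmetic input needed: the order of the monster is divisible by $5^9$, by $7^6$ and by $11^2$, so the $p$-part of $|G|$ is at least $p^2$ for every $p\in\{5,7,11\}$.

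Next, for each maximal subgroup $M$ appearing in Table~\ref{t: table2'} I would read off from the ATLAS~\cite{ATLAS} a prime $p\in\{5,7,11\}$ that divides $|M|$ but such that $p^2$ does not divide $|M|$; this exact-divisibility property is precisely the criterion by which the table was assembled, and checking it amounts to comparing the prime factorisation of $|M|$ with that of $|G|$. Since the $p$-part of $|G|$ is at least $p^2$ while $p$ divides $|M|$ exactly once, $p$ divides $|G:M|=|\Omega|$; thus $p$ divides both $|\Omega|$ and $|G_\alpha|=|M|$, while $p^2$ does not divide $|G_\alpha|$.

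By Cauchy's theorem $M$ contains an element $x$ of order $p$, and then $x\in G_\alpha$. Since $p\in\{5,7,11\}$, Lemma~\ref{l: M1} applies to $x$ and furnishes an element $y\in G$ such that $V:=\langle x,x^y\rangle$ is elementary abelian of order $p^2$ and $x\,x^y$ is conjugate to $x$ in $G$. As $x^y$ is also $G$-conjugate to $x$, the subgroup $V=\langle x,x^y\rangle$ meets all the hypotheses of Lemma~\ref{l: M2} with $g:=x\in G_\alpha$ and $h:=x^y$: indeed $p$ divides $|\Omega|$ and $|G_\alpha|$, $p^2$ does not divide $|G_\alpha|$, and both $h$ and $gh$ are $G$-conjugate to $g$. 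Hence Lemma~\ref{l: M2} shows that $G$ is not binary in its action on $\Omega$, which is the desired conclusion.

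I expect no serious obstacle here: the argument is a direct application of the two preceding lemmas. The only genuine work, carried out case by case, is the arithmetic verification that one of $5$, $7$, $11$ divides $|M|$ exactly once for each row of Table~\ref{t: table2'}. The one point requiring care is that the prime must lie in $\{5,7,11\}$, since Lemma~\ref{l: M1}, and hence this entire approach, is unavailable for $p\in\{2,3\}$; maximal subgroups of $G$ for which no prime in $\{5,7,11\}$ divides the order exactly once are deliberately excluded from Table~\ref{t: table2'} and will have to be treated by separate arguments.
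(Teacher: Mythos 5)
Your proposal is correct and follows essentially the same route as the paper: the paper's proof is precisely to compare $|G:M|$ with $|M|$ and apply Lemmas~\ref{l: M1} and~\ref{l: M2} with the prime $p\in\{5,7,11\}$ listed in the second column of Table~\ref{t: table2'}. Your write-up simply makes explicit the arithmetic criterion ($p$ divides $|M|$ exactly once while $p^2$ divides $|G|$, so $p$ divides $|\Omega|$) that the paper leaves implicit in the phrase ``compare $|G:M|$ with $|M|$.''
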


Note that the final line of the table is the remaining group from category {\bf (b)}, hence, once this lemma is disposed of, we only deal with groups from category {\bf (a)}.

\begin{proof}
If suffices to compare $|G:M|$ with $|M|$ and apply Lemmas~\ref{l: M1} and~\ref{l: M2}. For simplicity we have highlighted in the second column of Table~\ref{t: table2'} the prime $p$ that we are using to apply Lemma~\ref{l: M2}.
\end{proof}

\begin{table}[!ht]
  \begin{tabular}{|c|c|}\hline
    Maximal Subgroup &Prime\\\hline
$2.B$&$11$\\
$2^{1+24}.Co_1$&11\\
    $3.Fi_{24}$&$11$\\
    $2^{2}.{^{2}E_6(2)}.S_3$&11\\
    $2^{10+16}.O_{10}^+(2)$&7\\
    $2^{2+11+22}.(M_{24}\times S_3)$&7\\
    $3^{1+12}.2Suz.2$&7\\
    $2^{5+10+20}.(S_3\times L_5(2))$&7\\
    $2^{3+6+12+18}.(L_3(2)\times 3S_6)$&7\\
    $3^8.O_8^{-}(3).2_3$&7\\
    $(D_{10}\times HS).2$&7\\
    $(3^2:2\times O_8^+(3)).S_4$&7\\
    $3^{2+5+10}.(M_{11}\times 2S_4)$&11\\
    $5^{1+6}:2J_2:4$&$7$\\
$(A_5\times A_{12}):2$&7\\
$(A_5\times U_3(8):3_1):2$&7\\
$(L_3(2)\times S_4(4):2).2$&7\\
$(5^2:[2^4]\times U_5(5)).S_3$&7\\
$7^{1+4}:(3\times 2S_7)$&5\\\hline
$L_2(16).2$ & 5 \\\hline
  \end{tabular}
  \caption{Primitive actions of the Monster for Lemma~\ref{l: M3}.}\label{t: table2'}
  \end{table}

\subsection{Using Test 5}

We next apply Test 5 to the remaining maximal subgroups of $G$. The statement that we need is the following.
  
\begin{lem}\label{l: M4}
Let $G$ be the Monster and let $M$ be a maximal subgroup of $G$. If $M$ is as in the first column of Table~$\ref{t: table2}$, then the action of $G$ on the right cosets of $M$ in $G$ is not binary.
\end{lem}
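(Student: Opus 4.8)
The plan is to apply Test 5, that is, Lemma~\ref{l: alot}, to each maximal subgroup $M$ listed in the first column of Table~\ref{t: table2}. For each such $M$, we fix a prime $p$ (recorded in the accompanying column of the table, and typically $p=2$) with the property that $p$ does not divide $|\Omega|-1=|G:M|-1$; since $|G:M|$ is even for every maximal subgroup of the Monster of even order, $p=2$ is legitimate here, and for the rare case where $M$ has odd order or where the index behaves badly modulo $2$ one picks the small odd prime indicated. With $p$ chosen, the task reduces, by Lemma~\ref{l: alot}, to checking that \emph{every} transitive permutation representation of $M$ of degree greater than $1$ and coprime to $p$, and satisfying conditions (1)--(3) of that lemma, fails to be binary. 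If this holds then $p \nmid |\Omega|-1$ forces, via the lemma, that $G$ acting on $G/M$ is not binary.

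First I would, for each $M$, enumerate the conjugacy classes of subgroups $H \le M$ of index coprime to $p$ and greater than $1$; this is done computationally in {\tt GAP} or {\tt magma} working purely with the abstract group $M$ (given by standard generators, or as a matrix/permutation group), so no construction of $G$ or of the huge coset space $G/M$ is ever needed. Among these we discard any $H$ for which the action of $M$ on $M/H$ violates condition (2) (some composition factor of $M$ not appearing as a section of $M^{M/H}$, equivalently $M/\mathrm{core}_M(H)$) or condition (3) (the kernel $K = \mathrm{core}_M(H)$ is nontrivial and $H$ does \emph{not} possess a normal subgroup $N$ with $N \ne K$ and $N \cong K$). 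For each surviving candidate action, I would run Tests 1--4 on the group $M^{M/H}$: compute the permutation character and check the inequality of Lemma~\ref{l: characters}; compute the $2$-closure and invoke Lemma~\ref{l: fedup}; or run the direct $2$- versus $3$-subtuple-completeness analysis of Test 3. If in \emph{every} surviving case one of these tests certifies non-binariness, we are done for that $M$; the table entry records the prime that made this work.

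The main obstacle is purely computational scale rather than conceptual: for the largest maximal subgroups of the Monster (for instance those of the shapes appearing in category {\bf (a)} with large unipotent radicals) the number of subgroups of odd index can be very large, and constructing a faithful permutation representation of $M$ of manageable degree on which to run Tests 2--4 may itself be delicate. In practice one mitigates this by (i) working up the subgroup lattice only as far as needed — the conditions (2) and (3) prune aggressively, since condition (2) essentially forces $H$ to contain no normal subgroup of $M$ hitting a composition factor, and for $p=2$ condition (1) already restricts to $H$ of odd index; and (ii) exploiting the fact that it suffices to exhibit \emph{one} failing test per candidate, so the cheap Test 1 (a character computation) disposes of most cases instantly and only a handful require the $2$-closure computation. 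I would present the outcome as Table~\ref{t: table2}, with the first column listing $M$ and the second the prime $p$ used, exactly parallel to Lemma~\ref{l: M3} and Table~\ref{t: table2'}; the proof of the lemma is then the single sentence that for each row, applying Test 5 with the indicated prime, every transitive action of $M$ of degree greater than $1$ and coprime to $p$ satisfying (1)--(3) of Lemma~\ref{l: alot} is non-binary (verified by Tests 1--4), whence $G$ on $G/M$ is not binary by Lemma~\ref{l: alot}.
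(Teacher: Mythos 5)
Your proposal matches the paper's proof exactly: the paper's argument for this lemma is precisely the one-line observation that each group in Table~\ref{t: table2} is excluded by Test 5 (i.e.\ Lemma~\ref{l: alot}) applied with the prime recorded in the second column, with the candidate actions of $M$ satisfying (1)--(3) enumerated computationally and their non-binariness certified by Tests 1--4. The only detail you slightly underestimate is recorded in the paper's table footnote: for $M\cong (A_7\times (A_5\times A_5):2^2):2$ the direct analysis of Test 3 had to be pushed from $3$-tuples to $4$-tuples to witness non-binariness, but this does not change the method.
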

\begin{proof}
Table~\ref{t: table2} lists precisely those remaining maximal subgroups that can be excluded using Test 5, together with the prime $p$ that has been used.
\end{proof}

\begin{table}[!ht]
\centering
\begin{threeparttable}
\begin{tabular}{@{}p{\textwidth}@{}}
\centering
\begin{tabular}{|c|c|c|}\hline
    Maximal Subgroup &Prime &Comments\\\hline
$(7:3 \times He):2$&$2$&\\
$(A_6\times A_6\times A_6).(2\times S_4)$&2&\\
    $(5^2:[2^4]\times U_3(5)).S_3$&$2$&\\
    $(L_2(11)\times M_{12}):2$&$2$&\\
    $(A_7\times (A_5\times A_5):2^2):2$&$2$&checked $4$-tuples\tnote{1}\\
    $5^4:(3\times 2L_2(25)):2$&$2$&\\
    $7^{2+1+2}:\mathrm{GL}_2(7)$&$2$&\\
    $M_{11}\times A_6.2^2$&$2$&\\
    $(S_5\times S_5\times S_5):S_3$&$3$&\\
    $(L_2(11)\times L_2(11)):4$&$2$&\\
    $13^2:(2L_2(13).4)$&$2$&\\
    $(7^2:(3\times 2A_4)\times L_2(7)).2$&$2$&\\
    $(13:6\times L_3(3)).2$&$2$&\\
    $13^{1+2}:(3\times 4S_4)$&$2$&\\
    $L_2(71)$&$2$&\\
    $L_2(59)$&$5$&\\
    $11^2:(5\times 2A_5)$&$2$&\\
$L_2(41)$&$2$&\\
    $L_2(29):2$&$2$&\\
    $7^2:\mathrm{SL}_2(7)$&$2$&\\
    $L_2(19):2$&$2$&\\
    $41:40$&$2$&\\\hline
  \end{tabular}\end{tabular}
  \caption{Primitive actions of the Monster for Lemma~\ref{l: M4}.}\label{t: table2}
  \begin{tablenotes}
   \item[1] \footnotesize This action turned out to be rather problematic. Each transitive action of odd degree satisfying the conditions~(1),~(2),~(3) in Lemma~\ref{l: alot} is not binary. However, for some of these actions to witness the non-binariness we had to resort to $4$-tuples, which was particularly time consuming.
  \end{tablenotes}
  \end{threeparttable}
  \end{table}
  
\subsection{The remainder}

By ruling out the groups listed in Tables~\ref{t: table2'} and \ref{t: table2}, we are left with precisely five subgroups on Wilson's list \cite{wilsonArXiv}. We now deal with these one at a time and, in so doing, we complete the proof of Theorem~\ref{t: sporadic}. The remaining groups are as follows:
$$S_3\times Th,\hspace{1cm} 
3^{3+2+6+6}:(L_3(3)\times SD_{16}),\hspace{1cm}
(7:3\times He):2,\hspace{1cm}
5^{3+3}.(2\times L_3(5)),\hspace{1cm}
5^{2+2+4}:(S_3\times \mathrm{GL}_2(5)).
$$

\smallskip

{\bf Suppose that $M\cong S_3\times Th$.} Here we refer to \cite[Section 2]{wilsonoddlocal}. There are three conjugacy classes of elements of order $3$ in the Monster $G$, of type 3A, 3B and 3C and, the normalizers of the cyclic subgroups generated by the elements of type 3C are maximal subgroups of $G$ conjugate to $M$. Choose $x$, an element of type 3C with $M=\nor G {\langle x\rangle }$. We write $M:=H\times K$, where $H\cong S_3$ and $K\cong Th$. From \cite[first two lines of the proof of Proposition~2.1]{wilsonoddlocal}, for every $y\in K$ of order $3$, $xy$ is an element of type 3C. From the subgroup structure of the Thompson group $Th$, the group $K$ contains an element $y$ of order $3$ with $\nor {K}{\langle y\rangle}$ of shape $(3\times G_2(3)):2$ and maximal in $K$. Since $x$ and $xy$ are in the same $G$-conjugacy class, there exists $g\in G$ with $x^g=xy$. Moreover, an easy computation inside the direct product $M=H\times K$ yields $M\cap M^g=\nor {G}{\langle x\rangle}\cap \nor G{\langle xy\rangle}=\nor M{\langle xy\rangle}\cong (\langle x\rangle\times \cent K y):2$ has shape $(3\times 3\times G_2(3)):2$. This shows that the action of $M$ on the right cosets of $M\cap M^g$ is permutation isomorphic to the primitive action of $Th$ on the right cosets of $(3\times G_2(3)):2$. In other words, $G$ has a suborbit inducing a primitive action of the sporadic Thompson group. From Proposition \ref{p: 1}, this action is not binary, and hence the action of $G$ on the right cosets of $M$ is not binary by Lemma~\ref{l: again0}.



\smallskip

{\bf Suppose that $M\cong 3^{3+2+6+6}:(L_3(3)\times SD_{16})$.} Arguing as in the previous case, we note that $M$ contains only elements of type 13A and no elements of type 13B. Let $Q$ be a $13$-Sylow subgroup of $M$ and let $P$ be a $13$-Sylow subgroup of $G$ with $Q\le P$. Observe that $P$ is an extraspecial group of exponent $13$ of order $13^3$ and that $Q$ has order $13$. Replacing $P$ by a suitable $G$-conjugate we may also assume that $Q\ne \Zent P$. (Observe that to guarantee that we may actually assume that $Q\ne \Zent P$ we need to use~\cite[page~15]{wilsonoddlocal}, which describes how the $13$-elements of type A and B are partitioned in $P$. Indeed, not all $13$-elements of type B are in $\Zent P$ and hence, if accidentally $Q=\Zent P$, we may replace $Q$ with a suitable conjugate.)  

Let $\alpha\in \Omega$ with $G_\alpha=M$ and set $\Lambda:=\alpha^P$. From the previous paragraph, $P$ acts faithfully on the set $\Lambda$ and $|\Lambda|=13^2$. Now the permutation group $P$ in its action on $\Lambda$ is not $2$-closed; indeed the $2$-closure of $P$ in its action on $\Lambda$ is of order $13^{14}$, it is a Sylow $13$-subgroup of $\Sym(\Lambda)$ (this follows from an easy computation or directly from~\cite{Dobson}). Since P embeds into $G^{\Lambda}$, the $2$-closure of $G^{\Lambda}$  contains the $2$-closure of $P$, but since $13^{14}$ does not divide the order of $|G|$, $G^{\Lambda}$ is not $2$-closed. Lemmas~\ref{l: again12} and \ref{l: fedup} imply that the action is not binary.

\smallskip
{\bf Suppose that $M\cong (7:3\times He):2$.} Observe that $He$ has a unique conjugacy class of elements of order $5$ and that its Sylow $5$-subgroups are elementary abelian of order $5^2$. Thus, we let $V:=\langle g,h\rangle$ be an elementary abelian $5$-subgroup of $M$ and we note that $g,h$ and $gh$ are $M$-conjugate and hence $G$-conjugate. The group $G$ has two conjugacy classes of elements of order $5$, denoted 5A and 5B. We claim that $M$ contains only elements of type 5A. Indeed, a computation inside the Held group $He$ reveals that $\cent M g$ contains an element of order $7\times 3\times 5=105$ and hence $G$ contains an element $x$ of order $105$ with $x^{21}=g$ being an element of order $5$. By considering the power information on the conjugacy classes of $G$, we see that $g$ belongs to the conjugacy class of type 5A. Since all $5$-elements are conjugate in $M$, we get that $M$ contains only $5$-elements of type 5A.

We now calculate the number of fixed points of $g$ and of $V$ on $\Omega$, making use of \eqref{e: fora}.
Using the information on the conjugacy classes of $He$ and $G$ we deduce that
$$|\Fix_\Omega(g)|=\frac{|G|}{|M|}\frac{|M:\cent M g|}{|G:\cent G g|}=\frac{|\cent G g|}{|\cent M g|}=\frac{1365154560000000}{12600}=108345600000.$$
Next, since $V$ is a Sylow $5$-subgroup of $M$, we deduce that $|\nor M V|=50400$ using the structure of the Held group. Moreover, from~\cite[Section~9]{wilsonoddlocal}, we get that the normalizer of an elementary abelian $5$-subgroup of the Monster consisting only of elements of type 5A is maximal in $G$ and is of the form $(5^2:4\cdot 2^2\times U_3(5)):S_3$. In particular, $|\nor G V|=302 400 000$. Thus
$$|\Fix_\Omega(V)|=\frac{|G|}{|M|}\frac{|M:\nor M V|}{|G:\nor G V|}=\frac{|\nor G V|}{|\nor M V|}=\frac{302400000}{50400}=6000.$$
Now Lemma~\ref{l: added} implies that $G$ is not binary.




\smallskip

{\bf Suppose that $M\cong 5^{3+3}.(2\times L_3(5))$.} Let $P$ be a Sylow $31$-subgroup of $M$ and observe that $P$ is also a Sylow $31$-subgroup of $G$. Recall that $G$ has a maximal subgroup $K:=C\times D$, where $C\cong S_3$ and $D\cong Th$ (as usual $Th$ denotes the sporadic Thompson group). Now, by considering the subgroup structure of $Th$, we see that $D$ contains a maximal subgroup isomorphic to $2^5.\mathrm{L}_5(2)$ and hence $D$ contains a Frobenius subgroup $F$ isomorphic to $2^5:31$. Replacing $F$ by a suitable conjugate we may assume that $P\le F$.

Comparing the subgroup structure of $M$ and of $F$, we deduce $M\cap F=P$. Consider $\Lambda:=\alpha^F$. By construction, as $M=G_\alpha$, we get $|\Lambda|=32$ and $F$ acts as a $2$-transitive Frobenius group of degree $32$ on $\Lambda$. Since the $2$-closure of a $2$-transitive group of degree $32$ is $\Sym(32)$ and since $G$ has no sections isomorphic to $\Sym(32)$, we deduce from Lemma~\ref{l: fedup} that $G^\Lambda$ is strongly non-binary. Therefore $G$ is not binary by Lemma~\ref{l: again12}.

\smallskip

{\bf Suppose that $M\cong 5^{2+2+4}:(S_3\times \mathrm{GL}_2(5))$.} For this last case we invoke again the help of a computer aided computation based on Lemma~\ref{l: alot}, but applied in a slightly different way than what we have described in Test~5. (We thank Tim Dokchitser for hosting the computations required for dealing with this case.) Observe that $|\Omega|-1$ is divisible by $5$, but not by $5^2$.

With \texttt{magma} we construct all the transitive permutation representations on a set $\Lambda$ of degree greater than $1$ and with $|\Lambda|$ not divisible by $5^2$ of $M$. (Considering that a Sylow $5$-subgroup of $M$ has index $576$, this computation does require some time but it is feasible.) Next, with a case-by-case analysis we see that none of these permutation representations satisfies (1),~(2),~(3) and~(4). Therefore, every transitive permutation representation of $M$ of degree greater than $1$ satisfing (1),~(2),~(3) and~(4) has degree divisible by $25$. Now, from Lemma~\ref{l: alot} applied with $d:=25$, we get that $G$ in its action on the set $M$ of right cosets of $M$ in $G$ is not binary because $25$ does not divide $|\Omega|-1$.

\end{document}